\documentclass[11pt,a4paper]{article}
\usepackage[utf8]{inputenc}
\usepackage{amsmath, amssymb, amsthm}
\usepackage{graphicx,hyperref,geometry}
\usepackage{a4wide}
\usepackage{xcolor,verbatim}

\newtheorem{theorem}[subsection]{Theorem}
\newtheorem{corollary}[subsection]{Corollary}
\newtheorem{lemma}[subsection]{Lemma}
\newtheorem{proposition}[subsection]{Proposition}

\newtheorem{fact}[subsection]{Fact}

\newtheorem{conjecture}[subsection]{Conjecture}

\newtheorem{question}[subsection]{Question}

\newcommand{\s}{$\mathrm{\check{s}}$}

\title{Quest for graphs of Frank number $3$}

\author{János Barát\thanks{Research supported by ERC Advanced Grant ”GeoScape”.} \\
\small Alfr\'ed R\'enyi Institute of Mathematics\\
\small University of Pannonia, Department of Mathematics\\
\small 8200 Veszprém, Egyetem utca 10., Hungary\\
\small \url{barat@renyi.hu} \\
and\\
Zoltán L. Blázsik\thanks{\protect\includegraphics[height=1cm]{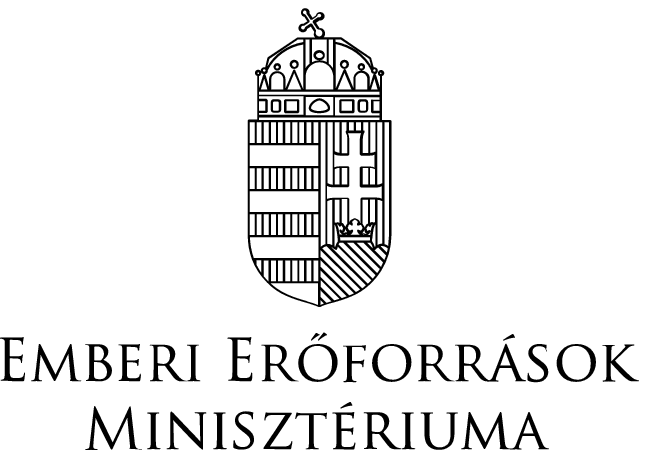} The author was supported by the \'UNKP-22-4-SZTE-480 New National Excellence Program of the Ministry of Human Capacities. The research was supported by the Hungarian National Research, Development and Innovation Office, OTKA grant no. SNN 132625.} \\
\small Alfr\'ed R\'enyi Institute of Mathematics\\
\small MTA--ELTE Geometric and Algebraic Combinatorics Research Group \\
\small SZTE Bolyai Institute \\
\small \url{blazsik@renyi.hu} \\
}
\date{\today}

\begin{document}

\maketitle

\begin{abstract}
In an orientation $O$ of the graph $G$, the edge $e$ is deletable if and only if $O-e$ is strongly connected.
For a $3$-edge-connected graph $G$, H\"orsch and Szigeti defined the Frank number as the minimum $k$ for which $G$ admits $k$ orientations such that every edge $e$ of $G$ is deletable in at least one of the $k$ orientations.
They conjectured the Frank number is at most $3$ for every $3$-edge-connected graph $G$.
They proved the Petersen graph has Frank number $3$, but this was the only example with this property.
We show an infinite class of graphs having Frank number $3$.
H\"orsch and Szigeti showed every $3$-edge-colorable $3$-edge-connected graph has Frank number at most $3$.
It is tempting to consider non-$3$-edge-colorable graphs as candidates for having Frank number greater than $2$.
Snarks are sometimes a good source of finding critical examples or counterexamples.
One might suspect various snarks should have Frank number $3$.
However, we prove several candidate infinite classes of snarks have Frank number $2$.
As well as the generalized Petersen Graphs $GP(2s+1,s)$.
We formulate numerous conjectures inspired by our experience.
\end{abstract}

\section{Introduction}
The graphs in this paper are finite and without loops or multiple edges. We recommend the book by Bondy and Murty \cite{b&m} for the concepts and notations used here.

A graph $G$ is defined by its vertex set $V$ and edge set $E$.
An {\it orientation} of $G$ is a directed graph $D= (V,A)$ such that each edge $uv\in E$ is replaced by exactly one of the arcs $(u,v)$ or $(v,u)$.

A {\it circuit} is a directed cycle.
A graph is {\it cubic} if every vertex has degree $3$.
A {\it chord} of a cycle or circuit $v_1,\dots,v_k$ is an edge connecting two non-consecutive vertices.

A graph is $3$-edge-connected if and only if the removal of any two edges leaves a connected graph.

An oriented graph is {\it strongly connected} if and only if selecting two arbitrary vertices $x$ and $y$, there is a directed $(x,y)$-path.
An orientation of $G$ is {\it k-arc-connected} if and only if the removal of any $k-1$ arcs leaves a strongly connected oriented graph.

\begin{theorem}[Robbins]
A graph has a strongly connected orientation if and only if it is $2$-edge-connected.
\end{theorem}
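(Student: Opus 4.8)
The statement has two directions, and I would treat the easy one first. Suppose $D$ is a strongly connected orientation of $G$. Then $G$ is connected, since a directed $(x,y)$-path is in particular an undirected $x$--$y$ walk. Moreover $G$ has no bridge: if $e=uv$ were a bridge, then $G-e$ would split into a part $X\ni u$ and a part $Y\ni v$, and in $D$ the unique arc replacing $e$ points one way, say from $X$ to $Y$; then no directed path of $D$ could run from $v$ back to $u$, contradicting strong connectivity. A connected, bridgeless graph is precisely a $2$-edge-connected graph, which settles this implication.

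For the converse the plan is to build a strongly connected orientation one ``ear'' at a time, keeping as an invariant that the part oriented so far induces a strongly connected subdigraph. Since $G$ is $2$-edge-connected (hence connected and bridgeless) and we may assume $|V(G)|\ge 2$, it contains a cycle; orient one such cycle consistently into a circuit $H$. Now suppose $H$ is a strongly connected oriented subgraph that does not yet cover all of $G$. If some edge $e=xy$ of $G$ has both endpoints already in $V(H)$ but $e\notin E(H)$, orient $e$ arbitrarily and add it to $H$; strong connectivity is trivially preserved. Otherwise, as $G$ is connected there is an edge $e=uv$ with $u\in V(H)$ and $v\notin V(H)$. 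Since $e$ is not a bridge, $G-e$ is connected and contains a $v$--$u$ path $P$; let $w$ be the first vertex of $P$ that lies in $V(H)$ (possibly $w=u$). The walk starting at $u$, traversing $e$ to $v$, and then following $P$ up to $w$ is an ear all of whose internal vertices are new; orient it as a directed $u$--$w$ path, or as a circuit through $u$ if $w=u$, and add it to $H$.

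The remaining check is that the enlarged digraph is again strongly connected: any two old vertices are still joined by old directed paths; from a new internal vertex of the ear one follows the ear forward to $w\in V(H)$; and to reach a new internal vertex one first travels inside $H$ to $u$ and then moves forward along the ear. Each step strictly increases $|V(H)|+|E(H)|$, so after finitely many steps $H$ is a strongly connected orientation of all of $G$. The only point needing care is the closed-ear case $w=u$, where the ear is oriented as a circuit; the same reachability argument applies verbatim. Equivalently, one may simply invoke the ear-decomposition characterization of $2$-edge-connected graphs and orient each ear as above; or one may use a depth-first-search orientation, directing tree edges away from the root and back edges toward it, bridgelessness guaranteeing that every tree edge is covered by some back edge. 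I expect the main obstacle here to be purely the bookkeeping of the induction rather than any genuine difficulty.
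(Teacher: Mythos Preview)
The paper does not actually prove Robbins' theorem; it is quoted in the introduction as a classical background result without proof, so there is no authorial argument to compare against. Your proof is correct and follows a standard ear-decomposition route: the forward direction is the usual bridge argument, and for the converse you grow a strongly connected subdigraph by cycles and ears, which is exactly one of the textbook proofs of the result. The bookkeeping in your inductive step is fine (in particular, your choice of $w$ as the first vertex of $P$ lying in $V(H)$ guarantees the ear is a genuine path or closed ear with new interior), and the alternative DFS argument you sketch at the end would also work.
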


The following theorem is a fundamental result in the theory of directed graphs \cite{n-w}.

\begin{theorem}[Nash-Williams] \label{t:NW}
A graph has a $k$-arc-connected orientation if and only if it is $2k$-edge-connected.
\end{theorem}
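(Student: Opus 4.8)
The forward implication is easy, and I would dispatch it first. Suppose $D$ is a $k$-arc-connected orientation of $G$ and let $\emptyset\neq X\subsetneq V$. If at most $k-1$ arcs left $X$, deleting all of them would destroy every directed path from $X$ to $V\setminus X$, contradicting $k$-arc-connectivity; so at least $k$ arcs leave $X$, and by the symmetric argument applied to $V\setminus X$ at least $k$ arcs enter $X$, whence $d_G(X)\ge 2k$. Since $X$ was an arbitrary nonempty proper subset, $G$ is $2k$-edge-connected.

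For the converse one must actually \emph{construct} a $k$-arc-connected orientation, and here is the plan. First I would reformulate the target: by Menger's theorem an orientation $D$ is $k$-arc-connected if and only if $\varrho^-_D(X)\ge k$ for every $\emptyset\neq X\subsetneq V$ (equivalently $\varrho^+_D(X)\ge k$), so it suffices to produce an orientation all of whose set-indegrees are at least $k$. The approach is a local-improvement argument: start from an arbitrary orientation $D$, call a set \emph{deficient} if $\varrho^-_D(X)\le k-1$, and if a deficient set exists take an inclusionwise-minimal one $X_0$. Because $d_G(X_0)\ge 2k$ we get $\varrho^+_D(X_0)\ge k+1$, so some arc $(u,v)$ with $u\in X_0$, $v\notin X_0$ leaves $X_0$, and I reverse a suitably chosen such arc. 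Reversing $(u,v)$ raises $\varrho^-_D(X_0)$ by one, but may lower $\varrho^-_D(Z)$ for sets $Z$ with $v\in Z$ and $u\notin Z$. The crux is to show — using submodularity of $X\mapsto\varrho^-_D(X)$ together with the minimality of $X_0$ — that the departing arc can be picked so that no previously tight set ($\varrho^-_D=k$) is turned deficient; concretely one uncrosses $X_0$ against a hypothetical newly-deficient set to reach a contradiction. Granting that, each reversal strictly decreases a fixed nonnegative potential, e.g.\ the total deficiency $\sum_X\max(0,k-\varrho^-_D(X))$ over all proper nonempty $X$, so the process halts at an orientation with no deficient set, i.e.\ a $k$-arc-connected one. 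Equivalently, the whole existence question can be cast as the feasibility of a submodular flow and settled by the Edmonds--Giles theorem.

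A classical alternative I would keep in reserve is induction by splitting off: working with multigraphs (harmless, and it implies the simple-graph case), repeatedly apply the edge version of Mader's splitting-off theorem at a vertex to cut down the number of edges while preserving all pairwise edge-connectivities among the remaining vertices — hence $2k$-edge-connectivity — orient the smaller graph inductively, and lift the orientation back through each split, since a split-off and its inverse leave every relevant cut size unchanged. The base cases ($|V|\le 2$, or $k=1$, which is just Robbins' theorem stated above) are immediate. Either way the genuine difficulty is the same: a single local modification — reversing one arc, or splitting off one pair of edges — must be controlled against \emph{every} cut simultaneously, and that control is exactly what submodular uncrossing supplies in the first approach, and what Mader's theorem (plus a little parity bookkeeping to locate an even-degree vertex to split at) supplies in the second. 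I expect the uncrossing step to be where essentially all of the work lies.
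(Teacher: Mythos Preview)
The paper does not prove this statement at all: Theorem~\ref{t:NW} is quoted as a classical result of Nash--Williams with a citation to \cite{n-w}, and is used only as background (to argue that cubic graphs cannot have a $2$-arc-connected orientation, hence Frank number $\ge 2$). There is therefore nothing to compare your attempt against.

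That said, a brief comment on your sketch. The forward implication is fine. For the converse, your local-improvement outline is one of the standard modern routes, but the potential you propose does not obviously decrease: reversing an arc $(u,v)$ with $u\in X_0$, $v\notin X_0$ raises $\varrho^-$ on sets containing $u$ and not $v$, but \emph{lowers} it on sets containing $v$ and not $u$, and among the latter there may already be deficient sets whose deficiency now grows. Guaranteeing merely that no \emph{tight} set becomes deficient is not enough to force $\sum_X\max(0,k-\varrho^-_D(X))$ to drop. The usual fixes are either to work with a more refined potential, to phrase the argument as reorienting along a path/walk rather than a single arc, or---cleanest---to invoke Frank's orientation theorem (an orientation with prescribed lower bounds $\varrho^-(X)\ge f(X)$ exists iff $f$ is crossing-supermodular and $d_G(X)\ge f(X)+f(V\setminus X)$), which handles the uncrossing once and for all. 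Your splitting-off alternative is also standard and would go through; note, though, that Mader's theorem postdates Nash--Williams' original paper, which instead used an odd-vertex-pairing argument.
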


This opens the question for orientations of $3$-edge-connected graphs.
This was the motivation for H\"orsch and Szigeti \cite{szi} for the following concepts.
In an orientation $O$ of $G$ the edge $e$ is {\it deletable} if and only if $O-e$ is strongly connected.
For a $3$-edge-connected graph $G$, H\"orsch and Szigeti defined the {\it Frank number} $F(G)$ as the minimum $k$ for which $G$ admits $k$ orientations such that every edge $e$ of $G$ is deletable in at least one of the $k$ orientations.
H\"orsch and Szigeti \cite{szi} showed that any $3$-edge-connected graph $G$ satisfies $F(G)\le 7$ improving on an earlier result.

They also showed any $3$-edge-colorable $G$ has Frank number at most $3$, and the Petersen graph has Frank number $3$.
These results made us think probably some other non-$3$-edge-colorable graphs might have Frank number larger than 2.
Snarks are $4$-edge-chromatic cubic graphs and usually their girth is at least $5$.
The Petersen graph is the smallest snark.
The next smallest are the Blanu\v sa snarks.
They have Frank number 2.
We also studied an infinite snark family. In Section~\ref{snark}, we show that each Flower snark has Frank number 2.

Some crucial properties of the Petersen graph can be generalized to the so called Generalized Petersen graphs $GP(2s+1,s)$.
One might hope to find a graph among those, which has Frank number $3$.
However, we prove in Section \ref{sec:genpet} that $F(GP(2s+1,s))=2$ for $s\ge 3$.

In Section~\ref{sec:pre}, we consider a few typical infinite families of 3-edge-connected graphs and determine their Frank number, that turns out to be 2.
We introduce a useful tool, that help us checking the Frank number.
We also mention several facts, which we later use implicitly.

These results lead to the question whether there are any graphs with Frank number greater than 2 besides the Petersen graph. As our main result, we construct infinitely many graphs with Frank number 3 in Section~\ref{sec:lcm}. We show an operation, which preserves the Frank number and the edge-connectivity of $3$-edge-connected graphs, and produces a cubic graph from a cubic graph.
A graph $H$ is a {\it truncation} of a cubic graph $G$ if a vertex $v$ of $G$ is replaced by a triangle $v_1,v_2,v_3$ such that each neighbour of $v$ is adjacent to one of $v_1,v_2,v_3$ so that $H$ remains cubic. Truncation was probably first used in connection with Hamiltonian cycles of polyhedra. In Section \ref{sec:lcm}, we introduce the \emph{local cubic modification}, which generalizes truncation to vertices of larger degree.

\begin{theorem} \label{t:Frank3}
There are infinitely many cubic graphs  $G$ such that $F(G)=3$.
They can be constructed from the Petersen graph by successive truncations.
\end{theorem}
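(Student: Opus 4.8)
The plan is to prove Theorem~\ref{t:Frank3} in two parts: first, establish that truncation (as a special case of the local cubic modification promised in Section~\ref{sec:lcm}) preserves both $3$-edge-connectivity and the Frank number for cubic graphs; second, apply this operation repeatedly to the Petersen graph, whose Frank number is $3$ by the result of H\"orsch and Szigeti quoted in the introduction, to obtain an infinite family of pairwise non-isomorphic cubic graphs each of Frank number $3$.

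For the first part, I would argue as follows. Let $G$ be a $3$-edge-connected cubic graph and let $H$ be obtained by truncating a vertex $v$ into a triangle $v_1v_2v_3$, with the three former neighbours $u_1,u_2,u_3$ of $v$ now joined to $v_1,v_2,v_3$ respectively. That $H$ is again cubic and $3$-edge-connected is a short check: any $2$-edge-cut of $H$ either avoids the triangle entirely (contradicting $3$-edge-connectivity of $G$) or uses triangle edges, and one checks case by case that the remaining graph stays connected. The heart of the matter is the Frank number. For the inequality $F(H) \le \max(F(G),3)$ — or more precisely $F(H) \le F(G)$ once we know $F(G)\ge 3$, but we actually want $F(H)=F(G)$ — I would show how to transport orientations. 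Given an orientation $O$ of $G$ in which some edge $e$ is deletable, I would "blow up" the vertex $v$: the three arcs at $v$ become arcs at $v_1,v_2,v_3$, and one orients the triangle as a directed $3$-cycle (there are two choices); a short lemma shows that for a suitable choice of triangle orientation, every edge of $G$ that was deletable in $O$ remains deletable in the blown-up orientation of $H$, and moreover the (at most) three edges $u_1v_1,u_2v_2,u_3v_3$ and at least some triangle edges become deletable. Conversely, contracting the triangle of $H$ back to a vertex sends a deletable edge of $H$ (not incident to the triangle) to a deletable edge of $G$, and one must also cover the triangle edges and the pendant-type edges $u_iv_i$ separately, using that strong connectivity of $H-e$ with $e$ a triangle edge reduces to strong connectivity through the contracted vertex.

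Assembling these transport statements, the strategy for the lower bound $F(H) \ge 3$ is the contrapositive: if $H$ had an orientation family of size $2$ certifying Frank number $2$, contracting the triangle in each orientation would yield a family of size $2$ for $G$ in which every edge of $G$ is deletable — contradicting $F(G)=3$ (one must be slightly careful that contraction does not destroy deletability of non-triangle edges, which is the easy direction). For the upper bound $F(H)\le 3$, I would take the three orientations realizing $F(G)=3$ for the Petersen graph (or, inductively, for the previously truncated graph), apply the blow-up lemma to each, and verify that the newly created edges incident to the triangle are deletable in at least one of the three resulting orientations — here one gets to choose the triangle orientation independently in each of the three, which gives enough freedom. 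Finally, to get infinitely many graphs, note that each truncation strictly increases the number of vertices (by $2$), so iterating produces arbitrarily large graphs; since they all remain cubic and $3$-edge-connected with Frank number exactly $3$, we are done.

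The main obstacle I anticipate is the blow-up lemma: showing that one can orient the new triangle so that \emph{simultaneously} (i) every previously deletable edge stays deletable and (ii) enough of the new edges $u_iv_i$ and the triangle edges themselves become deletable. The subtlety is that deletability of an edge $f$ far from $v$ in the orientation of $H$ requires a directed path in $H-f$ routing "around" $v$ through the triangle in both directions as needed; with the triangle oriented cyclically this is automatic, but the direction of the cycle interacts with which of $u_1v_1,u_2v_2,u_3v_3$ can be deleted. I expect to need the fact (worth isolating as a preliminary lemma, of the kind the paper says it collects in Section~\ref{sec:pre}) that in a strongly connected orientation of a $3$-edge-connected graph, a vertex of in-degree and out-degree at least $1$ can always be split while preserving strong connectivity, together with a careful but routine case analysis of the three arcs at $v$ — either all three point the same way relative to $v$ (two in one out, or one in two out) and the triangle orientation is forced up to symmetry, or the analysis is symmetric. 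Once this lemma is in hand the rest is bookkeeping.
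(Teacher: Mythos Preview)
Your plan is essentially the paper's: it proves $F(G_v)\ge F(G)$ by contracting the triangle (Lemma~\ref{lem:lcm}) and $F(G_v)\le F(G)$ for $\deg(v)=3$ by extending each witnessing orientation of $G$ over the new triangle (Lemma~\ref{trunc}), then iterates from the Petersen graph. The one point to watch is your commitment to orienting the new triangle as a directed $3$-cycle: in such an orientation only \emph{one} of the three triangle arcs is deletable (the other two leave a vertex of out-degree~$1$ or enter a vertex of in-degree~$1$), so the paper also allows a second, non-cyclic orientation type on the triangle and organizes the case analysis around the minimum number $|\mathcal{S}|\in\{2,3\}$ of orientations needed to cover the three edges at $v$ --- admitting this second type is precisely what makes your ``enough freedom'' claim go through without further assumptions.
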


For instance, the first truncation of the Petersen graph is the Tietze graph. We prove exhaustively that indeed the Petersen graph is the only cubic $3$-edge-connected graph on at most 10 vertices having Frank number $3$. Inspired by the proofs of H\"orsch and Szigeti, we can show the following.

\begin{theorem} \label{triangle-free}
Let $G$ denote a $3$-edge-connected graph such that $F(G)\ge 3$. Then there exists a cubic triangle-free graph $H^{\ast}$ such that $F(H^{\ast})\ge F(G)\ge 3$.
\end{theorem}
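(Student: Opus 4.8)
The plan is to transform $G$ into $H^{\ast}$ by two successive reductions, each implemented by an operation that keeps the graph $3$-edge-connected and leaves the Frank number unchanged: first make the graph cubic, then remove its triangles one at a time.

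\emph{Reduction to the cubic case.} While $G$ has a vertex $v$ of degree $d\ge 4$, I apply the local cubic modification of Section~\ref{sec:lcm} at $v$, replacing $v$ by a $d$-cycle and distributing the $d$ edges formerly at $v$ among its vertices. By the results of Section~\ref{sec:lcm} this preserves $3$-edge-connectivity and the Frank number, and it lowers the number of vertices of degree $\ge 4$ by one, so after finitely many applications I obtain a cubic graph $G_1$. Since every replacement cycle has length $d\ge 4$, a short local check of the possible cycles through such a gadget shows that no triangle is ever created; hence $F(G_1)=F(G)\ge 3$, and all triangles of $G_1$ (if any) lie among the degree-$3$ vertices of $G$.

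\emph{Removing triangles.} The crucial point is that truncation is reversible. Let $H$ be cubic and $3$-edge-connected with $F(H)\ge 3$ and a triangle $T=v_1v_2v_3$. As $H$ is cubic, each $v_i$ has a unique neighbour $u_i$ outside $T$, and $u_1,u_2,u_3$ are pairwise distinct unless $H=K_4$: if $u_1=u_2=:u$, then inspecting the edges leaving $\{v_1,v_2,v_3,u\}$ forces either $u\sim v_3$ as well, so that $H=K_4$, or a cut of size at most $2$, contradicting $3$-edge-connectivity. Since $F(K_4)<3$ (indeed $F(K_4)=2$), the case $H=K_4$ is excluded, the $u_i$ are distinct, and contracting $T$ to a single vertex $v$ produces a \emph{simple} cubic graph $H':=H/T$ of which $H$ is precisely the truncation at $v$; by Section~\ref{sec:lcm}, $H'$ is then cubic and $3$-edge-connected with $F(H')=F(H)\ge 3$ and $|V(H')|=|V(H)|-2$.

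Applying this repeatedly, starting from $G_1$, I contract a triangle as long as one is present and obtain cubic $3$-edge-connected graphs $G_1,G_2,\dots$ with $F(G_i)=F(G)\ge 3$ and strictly decreasing order; a contraction may merge two adjacent outer neighbours into a fresh triangle, so the triangle count need not drop, but the vertex count does, which forces termination. The process can only stop at a triangle-free graph $H^{\ast}$, which is then cubic, triangle-free, $3$-edge-connected, and satisfies $F(H^{\ast})=F(G)\ge 3$. The routine ingredient is the case check in the first reduction that the cubic modification creates no short cycle; the real obstacle is the structural claim in the second reduction that a triangle of a cubic $3$-edge-connected graph other than $K_4$ has three distinct outer neighbours, together with choosing the vertex count (not the triangle count) as the monovariant for termination — beyond that, the argument rests entirely on the facts from Section~\ref{sec:lcm} that the local cubic modification and truncation preserve both $3$-edge-connectivity and the Frank number.
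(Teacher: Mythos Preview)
Your approach is essentially the paper's: first pass to a cubic extension via iterated local cubic modifications (the paper invokes Corollary~\ref{cubicext} for this), then contract triangles one at a time until none remain, ruling out termination at $K_4$ since $F(K_4)=2$.

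One slip worth noting: Section~\ref{sec:lcm} only proves $F(G_v)\ge F(G)$ for a local cubic modification at a vertex of degree $\ge 4$ (Lemma~\ref{lem:lcm}); the reverse inequality, hence equality, is established only when $\deg(v)=3$ (Lemma~\ref{trunc}). So from the first reduction you get $F(G_1)\ge F(G)$, not $F(G_1)=F(G)$, and at the end $F(H^{\ast})=F(G_1)\ge F(G)\ge 3$ rather than $F(H^{\ast})=F(G)$. This is harmless for the theorem, which only claims the inequality. Your remark that the cubic reduction creates no new triangles is correct but not needed---the triangle-contraction phase would dispose of any new triangles as well, which is exactly how the paper proceeds.
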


In Section~\ref{sec:small} we show that the Petersen graph is the only one with Frank number greater than 2
among cubic 3-edge-connected graphs on at most 10 vertices.

\section{Preliminaries} \label{sec:pre}

If $O$ is an orientation of $G$, then let $-O$ be the orientation, which we get by reversing every arc in $O$.

\begin{fact} \label{reverse}
The set of deletable edges is the same for $O$ and $-O$.
\end{fact}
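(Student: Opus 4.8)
The plan is to reduce the statement to the elementary observation that a directed graph is strongly connected if and only if the digraph obtained by reversing all of its arcs is strongly connected. First I would record that deleting an edge commutes with reversing all arcs: for any edge $e$ of $G$, the orientation $-O$ no longer containing $e$ is exactly the reversal of $O-e$, that is, $(-O)-e = -(O-e)$. This is immediate from the definitions, since both operations act on the arc set independently.

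Next I would verify the reversal invariance of strong connectivity. If $P$ is a directed $(x,y)$-path in a digraph $D$, then traversing $P$ in the opposite direction yields a directed $(y,x)$-path in $-D$; hence $D$ contains a directed $(x,y)$-path for every ordered pair $(x,y)$ if and only if $-D$ does, i.e. $D$ is strongly connected iff $-D$ is. Combining this with the previous paragraph, $O-e$ is strongly connected if and only if $-(O-e) = (-O)-e$ is strongly connected, which is exactly the assertion that $e$ is deletable in $O$ if and only if $e$ is deletable in $-O$. Since $e$ was arbitrary, the set of deletable edges of $O$ equals the set of deletable edges of $-O$.

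I do not anticipate any genuine obstacle here; the statement is essentially a definitional symmetry. The only point requiring a little care is the bookkeeping that "delete $e$" and "reverse every arc" really do commute, and that strong connectivity is phrased via the existence of directed paths in both directions, so that reversing all arcs simply swaps the roles of $x$ and $y$ and preserves the property.
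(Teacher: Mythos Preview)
Your argument is correct and is exactly the natural justification one would give: deletion commutes with global reversal, and strong connectivity is preserved under reversing all arcs, so deletability of any edge coincides in $O$ and $-O$. In the paper this statement is recorded as a \emph{Fact} without proof, so there is nothing to compare; your write-up simply fills in the routine verification the authors left implicit.
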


We routinely have to check if an edge is deletable.
The following observation shows one way to do that.

\begin{proposition}\label{lem:str_con_check}
Let $G$ be an arbitrary $2$-edge-connected graph, and $e=uv \in E(G)$. Suppose that $O$ is a strongly connected orientation of $G$ such that the arc corresponding to $e$ goes from $u$ to $v$. The orientation $O-e$, which we get by deleting the arc $(u,v)$ from $O$, is strongly connected if and only if there exists a directed path in $O-e$ from $u$ to $v$.
\end{proposition}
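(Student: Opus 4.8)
The statement is essentially an unpacking of what "strongly connected" means once we already know a lot about the orientation $O$. The plan is to prove both directions, the forward one being trivial and the reverse one requiring a short argument that every pair of vertices remains mutually reachable in $O-e$.

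For the forward direction: if $O-e$ is strongly connected, then in particular there is a directed $(u,v)$-path in $O-e$, since strong connectivity gives a directed path between every ordered pair of vertices. So this direction needs no work.

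For the reverse direction, suppose $P$ is a directed $(u,v)$-path in $O-e$. I would argue that for an arbitrary ordered pair $(x,y)$ of vertices there is still a directed $(x,y)$-path in $O-e$. Since $O$ is strongly connected, there is a directed $(x,y)$-path $Q$ in $O$. If $Q$ does not use the arc $(u,v)$, then $Q$ already lies in $O-e$ and we are done. If $Q$ does use $(u,v)$, then replace that single arc in $Q$ by the path $P$: this yields a directed $(x,y)$-walk in $O-e$, which contains a directed $(x,y)$-path. Hence every ordered pair is joined by a directed path in $O-e$, so $O-e$ is strongly connected. (One should note that $Q$ uses the arc $(u,v)$ at most once, since $Q$ is a path; even if one only has a walk, the walk-to-path extraction at the end still works, so this is not a real concern.)

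The only mild subtlety — and the step I would be most careful about — is the walk-versus-path bookkeeping: after splicing $P$ into $Q$ we genuinely get a walk rather than a path, and we must invoke the standard fact that a directed walk from $x$ to $y$ contains a directed $(x,y)$-path. Everything else is immediate from the definition of strong connectivity, so the proof is short.
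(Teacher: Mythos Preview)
Your proof is correct and follows essentially the same route as the paper: both prove the nontrivial direction by taking an arbitrary directed $(x,y)$-path in $O$, replacing any occurrence of the arc $(u,v)$ by the assumed $(u,v)$-path $P$, and observing that the resulting $(x,y)$-walk in $O-e$ yields an $(x,y)$-path. Your version is actually slightly more careful, since you make the walk-to-path extraction explicit while the paper leaves it implicit.
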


\begin{proof}
If there is no $(u,v)$-path in $O-e$, then $O-e$ is not strongly connected by definition.

If there is a $(u,v)$-path $P$ in $O-e$, then in any $(x,y)$-path of $O$, which uses the arc $(u,v)$, we replace $(u,v)$ by $P$.
Since $O$ was strongly connected, we now find an $(x,y)$-walk in $O-e$ for any pair $x$ and $y$. Therefore $O-e$ is strongly connected.
\end{proof}

Let us remark that Proposition \ref{lem:str_con_check} is true even if the edge $e$ is contained in an edge cut $C$ of size 2. In this case, $O-e$ cannot admit a strongly connected orientation and we can deduce this by showing that there are no directed paths from $u$ to $v$. Suppose to the contrary $O-e$ contains a directed path from $u$ to $v$. Consequently, $C$ must be a directed cut contradicting that $O$ is a strongly connected orientation.

\begin{fact}
Suppose $G$ is a graph and its strongly connected orientations $O_1, O_2,\dots,O_k$ show $F(G)=k$. By the strong connectivity, there is no sink or source of degree $3$ in $O_i$, for any $i\in\{1,2,\dots,k\}$.
\end{fact}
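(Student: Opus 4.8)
The statement is in essence the observation that a strongly connected digraph on at least two vertices can have neither a source nor a sink; the hypothesis ``degree $3$'' only records the cubic setting in which we shall later use it. So the plan is a short argument by contradiction, using nothing beyond the definition of strong connectivity together with Fact~\ref{reverse}.

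First I would suppose that some $O_i$ contains a source $v$ of degree $3$. Since $O_i$ is an orientation of $G$, each of the three edges incident to $v$ is replaced by exactly one arc; as $v$ is a source, all three of these arcs leave $v$, so $v$ has in-degree $0$ in $O_i$. Because $G$ is $3$-edge-connected it has at least two vertices, so we may choose $x \neq v$. A directed $(x,v)$-path would have to use an arc entering $v$ as its last arc, which is impossible; hence there is no directed $(x,v)$-path in $O_i$, contradicting strong connectivity. Thus no $O_i$ has a source of degree $3$.

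For the sink case I would invoke Fact~\ref{reverse}: the orientation $-O_i$ obtained by reversing every arc of $O_i$ is again strongly connected (a directed $(x,y)$-path of $O_i$ becomes a directed $(y,x)$-path of $-O_i$), and a sink of degree $3$ in $O_i$ becomes a source of degree $3$ in $-O_i$; applying the previous paragraph to $-O_i$ then yields the contradiction. (Equivalently, one argues directly that a sink $v$ of degree $3$ has out-degree $0$, so no directed $(v,x)$-path exists.) There is no genuine obstacle here; the only points needing minor care are that the degenerate case $|V(G)|=1$ is excluded since a $3$-edge-connected graph has at least two vertices, and that ``degree $3$'' enters only through the identity $\deg(v)=d^{+}(v)+d^{-}(v)$ valid for a full orientation.
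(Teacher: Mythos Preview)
Your argument is correct and matches the paper's reasoning: the paper does not give a separate proof at all, simply stating the fact with the phrase ``By the strong connectivity'' as its full justification. Your write-up is just a careful unpacking of that one-line observation (with the harmless use of Fact~\ref{reverse} to symmetrize the sink case), so there is no substantive difference in approach.
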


In an oriented graph, a vertex $x$ of total degree 3 is red, if there are precisely two arcs leaving $x$, similarly green, if there are precisely two arcs entering $x$. The following observation gives a necessary but not sufficient condition on the deletability of an arc in a cubic graph.

\begin{fact}
If $G$ is a cubic graph and $O$ is a strongly connected orientation of $G$, then an arc $e=(u,v)$ can be deletable only if $u$ is red and $v$ is green.
\end{fact}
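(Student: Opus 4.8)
The plan is a short case analysis at each endpoint of $e$, using the cubicity of $G$ together with the two hypotheses that $O$ is strongly connected and that $O-e$ is strongly connected. I would prove the contrapositive statement: if $e=(u,v)$ is deletable, then $u$ is red and $v$ is green.

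First I would look at the vertex $u$. Since $G$ is cubic, $u$ is incident with exactly three edges, one of which is $e$; write $f_1,f_2$ for the other two. The arc corresponding to $e$ leaves $u$. If both $f_1$ and $f_2$ also left $u$ in $O$, then $u$ would be a source of total degree $3$, which is impossible in a strongly connected orientation, so this already contradicts the hypothesis on $O$. If instead both $f_1$ and $f_2$ entered $u$, then in $O-e$ the vertex $u$ has out-degree $0$, i.e.\ it is a sink, so $O-e$ is not strongly connected, contradicting deletability of $e$. Hence exactly one of $f_1,f_2$ leaves $u$, so $u$ has out-degree $2$ in $O$, i.e.\ $u$ is red.

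The argument at $v$ is symmetric (one may also simply apply the previous paragraph to the reversed orientation $-O$ via Fact~\ref{reverse}): the arc of $e$ enters $v$; if the other two edges at $v$ both entered $v$, then $v$ would be a sink of degree $3$ in $O$, contradicting strong connectivity of $O$; if they both left $v$, then $v$ would have in-degree $0$ in $O-e$, again contradicting that $O-e$ is strongly connected. So exactly one of them enters $v$, whence $v$ has in-degree $2$ and is green.

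There is no serious obstacle here; the only point to keep straight is which contradiction is invoked in which case — the ``both out at $u$'' and ``both in at $v$'' cases already fail for $O$ itself by strong connectivity, whereas the ``both in at $u$'' and ``both out at $v$'' cases are exactly where the deletability of $e$, i.e.\ strong connectivity of $O-e$, is used.
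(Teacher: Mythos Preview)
Your proof is correct and essentially follows the paper's own justification. The paper argues via Proposition~\ref{lem:str_con_check} that deletability of $(u,v)$ forces a directed $(u,v)$-path in $O-e$, whence $u$ must have out-degree $2$ and $v$ in-degree $2$; your case analysis reaches the same conclusion directly from the definition of strong connectivity (ruling out sinks and sources in $O$ and $O-e$), which amounts to the same observation.
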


By Proposition \ref{lem:str_con_check}, the deletability of the arc $(u,v)$ is equivalent to the existence of a directed path from $u$ to $v$ in $O-e$. Therefore $u$ must have outdegree exactly 2, and $v$ must have indegree exactly 2. However, the example in Figure ~\ref{fig:rg} shows that these degree conditions are insufficient. If there exists an edge cut containing $e$ such that every arc except $e$ are going in the same direction, then after deleting $e$, this edge cut becomes a directed cut, hence no directed $(u,v)$-path exist anymore regardless of the in- and outdegree of $u$ and $v$.

\begin{figure}[!h]
    \centering
    \includegraphics[width=0.6\textwidth]{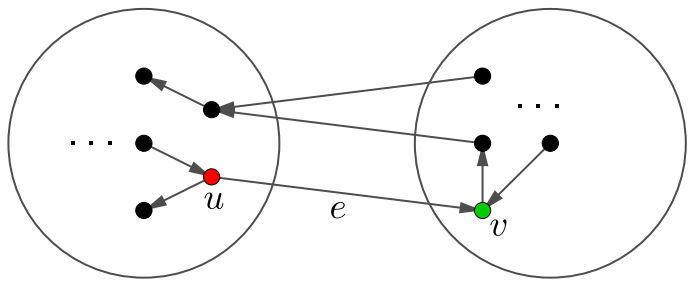}
    \caption{The arc $e=(u,v)$ is not deletable despite the fact that $u$ is red, and $v$ is green}
    \label{fig:rg}
\end{figure}

We use the following observation repeatedly. If $O$ is a strongly connected orientation of a $2$-edge-connected graph and $C$ is a circuit of $O$, then every chord of $C$ is deletable regardless of its orientation. Thus if $O$ contains a Hamiltonian circuit $C$, then every arc of $O-C$ is deletable.

\subsection{Three elementary classes}

We use the following three results later in Section~\ref{sec:small}. A typical $3$-edge-connected graph family is the so called wheel graphs.
For a positive integer $n\ge 3$, the wheel $W_n$ consists of a hub vertex $v_0$ and $n$ other vertices forming a cycle such that $v_0$ is adjacent to all other vertices forming the spoke edges.
Notice that $W_3$ is the complete graph on 4 vertices.

\begin{lemma}
For every positive integer $n\ge 3$, the wheel $W_n$ has Frank number $2$.
\end{lemma}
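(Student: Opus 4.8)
\textit{Plan.} The plan is to prove the two bounds $F(W_n)\ge 2$ and $F(W_n)\le 2$ separately. For the lower bound, observe that $W_n$ is $3$-edge-connected for every $n\ge 3$ (so its Frank number is defined) but is not $4$-edge-connected, since the three edges at any rim vertex form an edge cut; by the Nash--Williams Theorem~\ref{t:NW} it therefore has no $2$-arc-connected orientation, i.e.\ no single orientation in which every edge is deletable, so $F(W_n)\ge 2$.

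For the upper bound I would exhibit two strongly connected orientations $O_1,O_2$ that together make every edge deletable. Write $v_0$ for the hub, $v_1v_2\cdots v_nv_1$ for the rim, $r_i=v_iv_{i+1}$ (indices mod $n$) for the rim edges and $s_i=v_0v_i$ for the spokes. In $O_1$ orient the rim as the directed cycle $v_1\to v_2\to\cdots\to v_n\to v_1$, and orient $s_i$ towards $v_0$ when $i$ is odd and towards $v_i$ when $i$ is even. Then $O_1$ contains a Hamiltonian circuit (e.g.\ $v_0\to v_2\to v_3\to\cdots\to v_n\to v_1\to v_0$), hence is strongly connected; every spoke is deletable, since after deleting $s_i$ one joins its endpoints by running around the rim to the nearest other spoke of the same parity (this needs at least two indices of each parity, i.e.\ $n\ge 4$; the case $n=3$, which is $K_4$, I would dispatch by hand with two explicit orientations); and every rim arc $r_i$ with $i$ odd is deletable by Proposition~\ref{lem:str_con_check} via the detour $v_i\to v_0\to v_{i+1}$, which exists precisely because $s_i$ points to $v_0$ and $s_{i+1}$ to $v_{i+1}$.

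If $n$ is even, let $O_2$ be the same construction with the spoke parities swapped; then the rim arcs $r_i$ with $i$ even become deletable in $O_2$, and $O_1,O_2$ together cover every edge. If $n$ is odd this symmetric choice fails: the spoke orientations cannot alternate around the odd rim cycle, and a short counting argument shows that no orientation with the rim directed as a single cycle can make more than $(n-1)/2$ of the rim arcs deletable, so two such orientations can never cover all $n$ rim arcs. The remedy is to break the cyclic symmetry in $O_2$: keep $r_1,\dots,r_{n-1}$ oriented forward but reverse $r_n$ to $v_1\to v_n$, which forces $s_1$ towards $v_1$ and $s_n$ towards $v_0$, and orient the remaining spokes with $s_i$ towards $v_0$ for even $i$ and towards $v_i$ for odd $i$. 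Then $O_2$ still contains the Hamiltonian circuit $v_0\to v_1\to\cdots\to v_n\to v_0$, so it is strongly connected and its chords $s_2,\dots,s_{n-1},r_n$ are deletable; the arc $r_n=v_1\to v_n$ is deletable by going the long way $v_1\to v_2\to\cdots\to v_n$; and each rim arc $r_i$ with $i$ even ($2\le i\le n-1$) is deletable via $v_i\to v_0\to v_{i+1}$, except $r_{n-1}$, for which one uses $v_{n-1}\to v_0\to v_1\to v_n$. Since $O_1$ already covers all spokes and the rim arcs $r_1,r_3,\dots,r_{n-2}$, every edge is deletable in $O_1$ or $O_2$.

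The bulk of the write-up is routine bookkeeping: for each orientation, listing exactly which edges are deletable and exhibiting the short certifying paths (mostly of the form $v_i\to v_0\to v_{i+1}$ or a single trip around the rim). The only genuinely delicate point is the odd case --- recognising that the naive symmetric construction is obstructed and that reversing one rim edge in $O_2$ repairs the resulting ``seam'' at $r_n$ without spoiling strong connectivity or the deletability of the neighbouring rim arcs $r_{n-2}$ and $r_{n-1}$.
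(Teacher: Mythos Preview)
Your proof is correct and follows essentially the same approach as the paper: both split into even/odd cases, use the rim directed as a circuit with alternating spokes as the basic orientation, and handle the odd parity obstruction by reversing a single rim edge. The only cosmetic difference is that the paper breaks the symmetry in the \emph{first} orientation while you do so in the \emph{second} (keeping your $O_1$ uniform across parities and treating $n=3$ separately); your added counting remark explaining why two pure rim-circuit orientations cannot cover all rim edges for odd $n$ is a nice touch the paper omits.
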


\begin{proof}
Let $n$ be even.
We give the first orientation of the edges of $W_n$ as follows.
We orient the edges of the outer $n$-cycle to get a circuit.
We alternately orient the spoke edges to and from $v_0$ starting at $v_1$.
Now every arc leaving a red vertex is deletable.
They appear alternately both on the outer circuit and on the spokes.

Now it is easy to give a second orientation, where the remaining arcs are deletable.
We simply reverse every spoke.
These two orientations of $W_n$ show that the Frank number is 2.

Let now $n$ be odd.
We give the first orientation of the edges of $W_n$ as follows.
We orient the edges from $v_1$ to $v_n$ to get a directed path.
However, we orient the last edge from $v_1$ to $v_n$.
We alternately orient the spoke edges to and from $v_0$ except that $v_0v_1$ and $v_0v_2$ are both going outwards.
Now every arc from a red vertex to a green vertex is deletable.
They appear alternately on the outer cycle such that $(v_1,v_2)$ and $(v_1,v_n)$ are both deletable.
Every spoke edge is deletable except $(v_n,v_0)$ and $(v_0,v_1)$.

In the second orientation, we orient the outer cycle $v_n,v_{n-1},\dots, v_1$ to get a circuit.
We alternately orient the spoke edges to and from $v_0$ such that $v_1v_0$ and $v_nv_0$ are both going to $v_0$.
Now the odd-indexed vertices are red including both $v_1$ and $v_n$.
Therefore, all non-deletable arcs of the first orientation are deletable in the second.
\end{proof}

For an even integer $n\ge 4$, let the Möbius ladder $M_n$ be defined as follows.
Let $v_1,\dots,v_n$ be a cycle and we connect each opposite pair, these are edges of form $v_iv_{i+n/2}$.

\begin{lemma}
For every positive even integer $n\ge 4$, the graph $M_n$ has Frank number $2$.
\end{lemma}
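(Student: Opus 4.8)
The engine is the remark made just before the statement: in a strongly connected orientation every chord of a directed circuit is deletable, so all arcs of the complement of a directed Hamiltonian circuit are deletable at once. The outer cycle $v_1v_2\cdots v_n$ is a Hamiltonian cycle of $M_n$ and each rung $v_iv_{i+n/2}$ is a chord of it, so orienting the outer cycle as a circuit already makes every rung deletable; what remains is to make the $n$ outer edges deletable, using the two orientations together. The lower bound $F(M_n)\ge 2$ is free: in a strongly connected orientation of a cubic graph a vertex of in-degree $2$ has a single out-arc, which can never be deletable, so one orientation never works. Set $k=n/2$; I handle $k$ odd and $k$ even by different constructions, the split being forced by a parity phenomenon.

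\emph{The case $k$ odd.} Let $O_1$ be the outer circuit $v_1\to v_2\to\cdots\to v_n\to v_1$ and orient the rung at $v_i$ away from $v_i$ when $i$ is odd and into $v_i$ when $i$ is even; this is consistent because a rung joins $v_i$ and $v_{i+k}$, indices of opposite parity since $k$ is odd. For odd $i$ the path $v_i\to v_{i+k}\to v_{i+k+1}\to v_{i+1}$ lies in $O_1-v_iv_{i+1}$ --- its first and last arcs are the two rungs just oriented, the middle one a circuit arc --- so by Proposition~\ref{lem:str_con_check} the arc $v_iv_{i+1}$ is deletable in $O_1$. Taking $O_2$ to be the same outer circuit with every rung reversed makes the arcs $v_iv_{i+1}$ with $i$ even deletable. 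Since every rung is a chord of the outer circuit in both orientations, $O_1,O_2$ give $F(M_n)=2$.

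\emph{The case $k$ even.} Now the endpoints of each rung share a parity, so the alternation above is impossible; indeed an arc $v_iv_{i+1}$ of an outer circuit is deletable precisely when the rung at $v_i$ points out and the rung at $v_{i+1}$ points in, and a short argument shows that no two outer-circuit orientations can satisfy all $n$ of these requirements. Instead, for $k\ge 4$ use the twisted Hamiltonian circuit
\[
H_1:\quad v_1,\ v_2,\ v_{k+2},\ v_{k+1},\ \ldots,\ v_3,\ v_{k+3},\ v_{k+4},\ \ldots,\ v_{2k},\ v_1,
\]
built from the cycle edges $v_1v_2$ and $v_{2k}v_1$, the rungs $v_2v_{k+2}$ and $v_3v_{k+3}$, the descending segment $v_{k+2}v_{k+1}\cdots v_3$, and the ascending segment $v_{k+3}v_{k+4}\cdots v_{2k}$; its chords are the other $k-2$ rungs together with the two cycle edges $v_2v_3$ and $v_{k+2}v_{k+3}$. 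Let $H_2$ be the rotation of $H_1$ by the automorphism $v_i\mapsto v_{i+k/2}$, and let $O_1,O_2$ be the circuits $H_1,H_2$. Because $k\ge 4$, the two rungs used by $H_1$ are chords of $H_2$, so every rung is deletable in $O_1$ or $O_2$; four outer edges are chords of $O_1$ or $O_2$; and each of the remaining $2k-4$ outer edges lies on both circuits and can be made deletable, on whichever circuit we assign it to, by a three-arc detour produced by orienting that circuit's chords suitably. The rotational symmetry between $H_1$ and $H_2$ lets us split these $2k-4$ edges into two equal halves so that the orientations the chords are asked to take are mutually consistent; $O_1,O_2$ then witness $F(M_n)=2$. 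Each $O_i$ is strongly connected since it contains a Hamiltonian circuit, and Proposition~\ref{lem:str_con_check} applies to the explicit detours.

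\emph{The main obstacle.} Everything is straightforward for $k$ odd; the content is in $k$ even. There an arc of a Hamiltonian circuit is deletable only if the chord at its tail points outward and the chord at its head points inward, and two circuit arcs sharing a chord endpoint make incompatible demands. Choosing two twisted circuits related by $v_i\mapsto v_{i+k/2}$ --- rather than, say, an outer circuit plus one more orientation --- is precisely what makes the conflict pattern among the $2k-4$ leftover outer edges break up into one half served by $O_1$ and one by $O_2$. Turning this into a verified statement for all even $k$ is a bookkeeping matter; the tiny cases $n=4$ (where $M_4=K_4$ and the $k\ge 4$ recipe does not literally apply) and $n=8$ are best disposed of by direct inspection, to rule out degenerate coincidences.
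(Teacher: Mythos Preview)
Your odd-$k$ argument is correct and is exactly the paper's: orient the outer cycle as a circuit, alternate the rungs, and observe that each arc $v_iv_{i+1}$ with $i$ odd has the three-arc detour $v_i\to v_{i+k}\to v_{i+k+1}\to v_{i+1}$; reversing the rungs gives the second orientation.

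For $k$ even your approach diverges from the paper and, as written, has a real gap. The ``three-arc detour'' does not exist in your twisted circuit $H_1$: for an ascending arc $v_j\to v_{j+1}$ (with $k+4\le j\le 2k-1$) the chord at $v_j$ lands at $v_{j-k}$ and the chord at $v_{j+1}$ lands at $v_{j+1-k}$, but on $H_1$ the descending segment runs $v_{k+2}\to v_{k+1}\to\cdots\to v_3$, so the circuit arc between these two landing points goes the \emph{wrong} way ($v_{j+1-k}\to v_{j-k}$). A detour therefore needs several more chords --- for $k=6$ and the arc $v_{10}\to v_{11}$ one is $v_{10}\to v_4\to v_3\to v_2\to v_8\to v_7\to v_6\to v_5\to v_{11}$ --- and these longer detours impose chord-orientation constraints that interact across different arcs. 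You then assert that the $2k-4$ leftover outer edges can be split evenly between $O_1$ and $O_2$ with all such constraints consistent, calling it ``a bookkeeping matter''; but you neither specify the split nor the chord orientations, and simple attempts already collide (making both $v_{10}\to v_{11}$ and $v_{11}\to v_{12}$ deletable in $O_1$ forces the rung $v_5v_{11}$ both ways). This is precisely the hard step, and it is not done.

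By contrast, the paper keeps the \emph{outer} cycle as the circuit in $O_1$ even when $k$ is even, orients the rungs alternately, and accepts that this leaves two adjacent defects (near $v_n$ and $v_{n/2+1}$). All rungs and every second outer arc are then deletable in $O_1$. For $O_2$ it writes down one explicit near-Hamiltonian circuit $v_1,v_2,v_{n/2+2},v_{n/2+3},v_3,v_4,\ldots$ together with three extra arcs through the missing vertex $v_{n/2+1}$, and checks that the remaining outer arcs (including the two defective pairs) are deletable there. The difference in philosophy is that the paper isolates the parity defect to a bounded region and patches it with a single hand-built orientation, whereas your plan distributes the defect symmetrically between two twisted circuits --- elegant in principle, but you have not shown the required chord orientations can be chosen compatibly.
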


\begin{proof}
Let $n/2$ be an odd number.
We give the first orientation of the edges as follows.
We orient the cycle edges consecutively $(v_i,v_{i+1})$ to get a circuit.
This implies every diagonal edge is deletable independent of its orientation.
We orient the diagonal edges alternately.
That is, every vertex $v_i$ with odd index is the tail of an arc, and every even-indexed vertex is the head.
It implies that every second arc of the outer circuit is deletable,
since an arc $(v_i,v_{i+1})$ can be replaced by a directed path with 3 arcs.

Therefore it is immediate to construct the second orientation by reversing the diagonals.
\footnote{It is visually a rotation of the first orientation.}
Now every edge is deletable in at least one of the two orientations.

Let $n/2$ be an even number now.
The first orientation is the same as in the previous (odd) case.
We orient the cycle edges consecutively $(v_i,v_{i+1})$ to get a circuit.
We orient the diagonal edges alternately until the arc with head $v_{n/2}$.
Thereby, all diagonal edges are deletable and every second arc of the outer circuit, except that both arcs of the outer circuit incident to $v_n$ and $v_{n/2+1}$ are non-deletable.

Now we have to find a second orientation such that every second arc of the outer circuit is deletable, plus the two extra arcs we listed.
We create the following circuit:\\ $v_1,v_2,v_{n/2+2},v_{n/2+3},v_3,v_4,v_{n/2+4},v_{n/2+5},\dots,
v_{n/2-1},v_{n/2},v_n$, which contains every vertex but $v_{n/2+1}$.
We add $(v_{n/2},v_{n/2+1})$, $(v_{n/2+2},v_{n/2+1})$ and $(v_{n/2+1},v_1)$ thereby making the orientation strongly connected.
Therefore every second edge $(v_2,v_3),\dots,(v_{n/2-2},v_{n/2-1})$ and $(v_{n/2+3},v_{n/2+4}),\dots,(v_{n-1},v_n)$ are deletable independent of their orientation.
The extra arcs, we need are: $(v_{n/2},v_{n/2+1})$, $(v_{n/2+2},v_{n/2+1})$, $(v_{n-1},v_n)$ and $(v_n,v_1)$. Therefore, it is necessary to orient the edge $v_{n-1}v_n$ away from $v_n$.
This is routine to check the existence of the necessary directed paths.
Every edge is deletable in at least one of the two given orientations of $M_n$.
\begin{figure}[!h]
    \centering
    \includegraphics[width=0.8\textwidth]{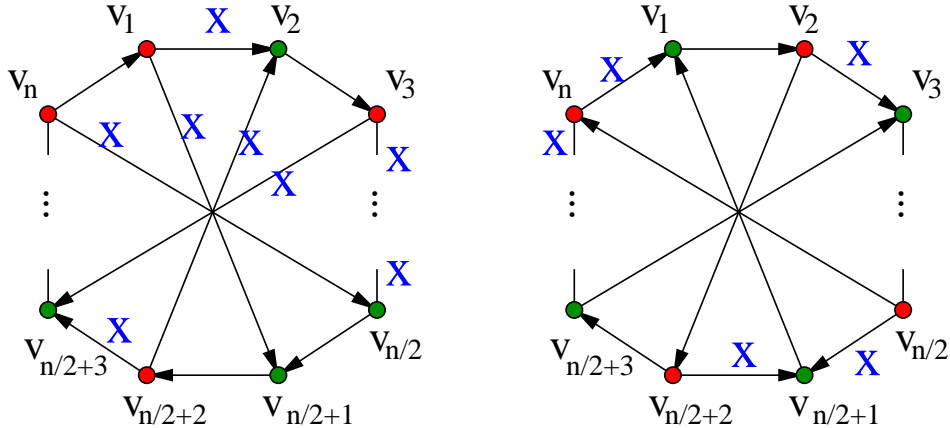}
    \caption{Two appropriate orientations of a Möbius ladder.}
    \label{fig:lcm}
\end{figure}
\end{proof}

\begin{lemma}
For every $k$, the prism $P_k=C_k\times K_2$ has Frank number $2$, where $k\ge 3$.
\end{lemma}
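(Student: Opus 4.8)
\noindent
The plan is to exhibit, for every $k\ge 3$, two strongly connected orientations $O_1,O_2$ of $P_k$ in which every edge is deletable in at least one of the two; this suffices, because $P_k$ is cubic, hence not $4$-edge-connected, and an orientation in which \emph{every} edge is deletable is exactly a $2$-arc-connected orientation, which by Theorem~\ref{t:NW} would force $4$-edge-connectivity, so $F(P_k)\ge 2$. Write $P_k$ as two vertex-disjoint $k$-cycles $u_1u_2\cdots u_k$ and $v_1v_2\cdots v_k$ joined by rungs $r_i=u_iv_i$ ($i\in\mathbb Z_k$). As with the wheels and the M\"obius ladders above, I would split on the parity of $k$.

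\smallskip\noindent\emph{$k$ even.} Let $O_1$ orient both $k$-cycles consistently as circuits, $u_i\to u_{i+1}$ and $v_i\to v_{i+1}$, and orient the rungs alternately: $r_i$ points $u_i\to v_i$ for odd $i$ and $v_i\to u_i$ for even $i$. This $O_1$ is strongly connected, and using Proposition~\ref{lem:str_con_check} together with the fact that each $k$-cycle is a circuit one checks three things: $(u_i,u_{i+1})$ is deletable iff $r_i$ leaves $u_i$ and $r_{i+1}$ enters $u_{i+1}$ (bypass $u_i\to v_i\to v_{i+1}\to u_{i+1}$); $(v_i,v_{i+1})$ is deletable iff $r_i$ leaves $v_i$ and $r_{i+1}$ enters $v_{i+1}$; and every rung is deletable as soon as at least two rungs point the same way as it, which holds because $k$ is even and $k\ge 4$. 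Since $k$ is even, the alternating pattern makes $r_i$ and $r_{i+1}$ point oppositely for every consecutive pair, so for each pair exactly one of the (mutually exclusive) first two conditions holds in $O_1$: either $(u_i,u_{i+1})$ is deletable in $O_1$, or $(v_i,v_{i+1})$ is. Taking $O_2$ to be $O_1$ with every rung reversed swaps these, so the other of the pair becomes deletable in $O_2$; hence every cycle-arc is deletable in $O_1$ or $O_2$, while all rungs are already deletable in $O_1$. This gives $F(P_k)=2$ for even $k\ge 4$.

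\smallskip\noindent\emph{$k$ odd.} First I would argue the previous recipe is impossible here: if both orientations have both $k$-cycles oriented as circuits, then covering $(u_i,u_{i+1})$ requires a ``down at $i$, up at $i{+}1$'' in one of the two rung-patterns and covering $(v_i,v_{i+1})$ requires an ``up at $i$, down at $i{+}1$'', and running this over all $i$ forces one of the patterns to alternate, which is impossible cyclically for odd $k$. So at least one orientation must break a cycle. My plan: keep $O_1$ as above — both cycles as circuits, rungs alternating with the single unavoidable ``defect'' of two consecutive equal rungs — and then compute exactly which arcs this leaves undeletable; it will be roughly half the top-arcs, half the bottom-arcs, and (only when $k=3$) one rung, all clustered around the defect, about $k$ edges in total. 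Then I would build $O_2$ from a Hamiltonian circuit of $P_k$, directed as a circuit with the $k$ remaining edges oriented suitably, chosen so that those $k$ non-circuit edges are precisely the arcs left uncovered by $O_1$; the one or two exceptional arcs that cannot be absorbed this way would be shown deletable in $O_2$ by an explicit path check via Proposition~\ref{lem:str_con_check}. This mirrors the ``even $n/2$'' M\"obius-ladder case, where the second orientation is built around an explicit near-spanning circuit.

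\smallskip The step I expect to be the main obstacle is this last one in the odd case: positioning the seam of $O_1$ and selecting the Hamiltonian circuit of $O_2$ so that every arc that parity forces to be non-deletable in $O_1$ is deletable in $O_2$, while keeping $O_2$ strongly connected — a bounded but delicate local case analysis near the seam. The tightest instance is $k=3$ (the triangular prism), where having only three rungs means one rung must fail to be deletable in \emph{each} orientation; there I would simply exhibit one explicit pair of orientations rather than invoke the general scheme. For all larger odd $k$ the bulk of the two cycles is handled uniformly by the alternating pattern exactly as in the even case, so only the seam requires extra care.
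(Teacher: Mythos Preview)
Your even-$k$ argument is correct and close to the paper's, the only cosmetic difference being that the paper orients the two $k$-cycles in \emph{opposite} directions (outer $(v_i,v_{i+1})$, inner $(u_{i+1},u_i)$) rather than the same direction; the deletability analysis and the ``reverse all spokes'' second orientation are then identical in spirit.

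The odd case, however, is not a proof but a plan, and you say so yourself (``I would\ldots'', ``the step I expect to be the main obstacle''). Nothing is actually verified: you neither specify the Hamiltonian circuit for $O_2$, nor show that its non-circuit edges can be made to coincide with the arcs left uncovered by $O_1$, nor handle $k=3$. So as it stands the lemma is only proved for even $k$. More importantly, the paper's route for odd $k$ is considerably simpler than your Hamiltonian-circuit plan: keep $O_1$ exactly as in the even case (outer circuit forward, inner circuit backward, spokes alternating), and for $O_2$ do \emph{not} build a Hamiltonian circuit at all --- instead reverse all spokes and reverse a single edge on each cycle (namely take $(v_2,v_1)$ and $(u_1,u_2)$ against the flow). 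This produces two explicit circuits
\[
C=v_1,u_1,u_2,v_2,v_3,\dots,v_k \quad\text{and}\quad C'=u_1,u_k,\dots,u_2,v_2,v_1,
\]
and every arc not yet covered by $O_1$ is then shown deletable in $O_2$ by routing through $C$ or $C'$ (with four arcs near the seam checked by hand). So the ``delicate local case analysis'' you anticipate is avoided entirely: the seam is absorbed by a two-edge flip rather than a global Hamiltonian restructuring, and the verification is uniform in $k$, including $k=3$.
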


As prisms are almost identical to Möbius ladders, the next proof is similar to the previous one.

\begin{proof}
Let $k$ be even.
We denote the outer cycle by $v_1,\dots v_k$ and the inner cycle by $u_1,\dots, u_k$ and the spoke edges by $u_iv_i$ for every $1\le i\le k$.
We give the first orientation of the edges as follows.
We orient the outer cycle edges consecutively $(v_i,v_{i+1})$ to get a circuit.
We orient the inner cycle edges consecutively backwards $(u_{i+1},u_i)$ to get a circuit.
We orient the spoke edges alternately.
That is, every vertex $v_i$ with odd index is the tail of a spoke arc, and every even-indexed vertex $v_i$ is the head.
This implies every spoke edge is deletable by Proposition~\ref{lem:str_con_check} finding a directed path using five edges.
By the same manner, we find that every arc from a red vertex to a green vertex is deletable on the outer or the inner circuit.
Since these arcs appear alternately, we get the second orientation by reversing the spokes\footnote{Visually rotating every arc by one.}.
We find that every edge is deletable in at least one of the two orientations.

Let $k$ be odd.
We denote the outer cycle by $v_1,\dots v_k$ and the inner cycle by $u_1,\dots, u_k$ and the spoke edges by $u_iv_i$ for every $1\le i\le k$.
We give the first orientation of the edges as follows.
We orient the outer cycle edges consecutively $(v_i,v_{i+1})$ to get a circuit.
We orient the inner cycle edges consecutively backwards $(u_{i+1},u_i)$ to get a circuit.
We orient the spoke edges alternately.
That is, every vertex $v_i$ with odd index is the tail of a spoke arc, and every even-indexed vertex $v_i$ is the head.
This implies every spoke edge is deletable by Proposition~\ref{lem:str_con_check} finding a directed path using the inner and outer circuit.
By the same manner, we find that every arc from a red vertex to a green vertex is deletable on the outer or the inner circuit.

We give the second orientation of the edges as follows.
We orient the outer cycle edges consecutively forward as $(v_i,v_{i+1})$ except $(v_2,v_1)$.
We orient the inner cycle edges consecutively backwards as $(u_{i+1},u_i)$ except $(u_1,u_2)$.
We reverse the spoke edges.
That is, every vertex $v_i$ with odd index is the tail of a spoke arc, and every even-indexed vertex $v_i$ is the head.
In particular, there are two circuits we use in the next part of the proof:
$C=v_1,u_1,u_2,v_2,v_3,\dots, v_k$ and $C'=u_1,u_k,\dots,u_2,v_2,v_1$.
We use Proposition~\ref{lem:str_con_check} again to show deletable edges.
The four arcs $(v_k,v_1), (v_2,v_1), (u_1,u_k), (u_1,u_2)$ are special.
We spell out one of them.
E.g. $(v_k,v_1)$ can be replaced by the path $v_k,u_k,\dots,u_2,v_2,v_1$.
For all other arcs from a red vertex to a green vertex, we use either $C$ or $C'$ to find the alternative directed path.
E.g. $(v_3,v_4)$ can be replaced by $(v_3,u_3),C',(u_4,v_4)$.
\end{proof}

\section{Local cubic modification} \label{sec:lcm}

Hörsch and Szigeti~\cite{szi} introduced the notion of \emph{cubic extensions} of a graph with minimum degree at least 3 in Subsection 2.3. It is a global modification, which replaces every vertex $v$ of degree at least 4 with a cycle of size $deg(v)$, leave the vertices of degree 3 intact, and substitute every edge with an edge between the corresponding objects in such a way that this not necessarily unique graph is cubic.

In contrast to that, we use the following local operation on a graph $G$ of minimum degree at least 3. For $d\ge 3$, let $v$ be a vertex of degree $d$, and let the neighbours of $v$ be $x_1,\dots , x_d$. We remove $v$ and replace each edge $vx_i$ by an edge $v_jx_i$ and add a cycle $C_v$ on $v_1,\dots v_d$ (see Figure \ref{fig:lcm}) so that each of the new vertices has exactly one neighbour from $x_1,x_2,\dots,x_d$. The resulting graph $G_v$ is a {\it local cubic modification} of $G$ at $v$. Let us remark that $G_v$ is not necessarily unique, it depends on the chosen perfect matching between $\{x_1,x_2,\dots,x_d\}$ and $\{v_1,v_2,\dots,v_d\}$. Note that for $d=3$ the truncation is a special local cubic modification.

\begin{figure}[!h]
    \centering
    \includegraphics[width=0.7\textwidth]{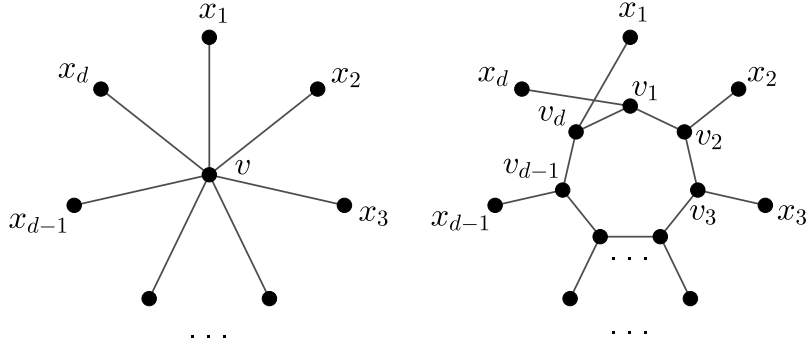}
    \caption{A local cubic modification at $v$}
    \label{fig:lcm}
\end{figure}

Let us emphasize that at this point it may happen that after performing a local cubic modification the edge-connectivity decreases (see Figure \ref{fig:ecdecrease}).

\begin{figure}[!h]
    \centering
    \includegraphics[width=0.8\textwidth]{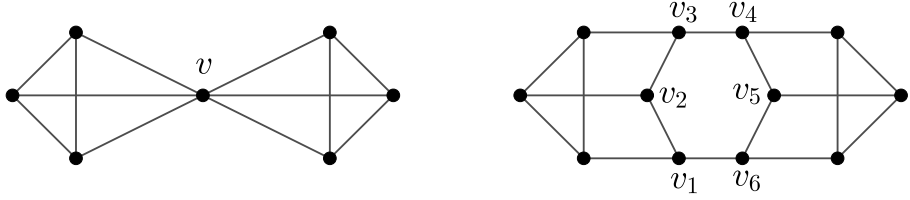}
    \caption{The edge-connectivity may decrease by performing a local cubic modification at $v$}
    \label{fig:ecdecrease}
\end{figure}

In this paper, we are interested in $3$-edge-connected graphs and for such graphs we show that there exists a local cubic modification, which remains $3$-edge-connected. Let $G$ denote an arbitrary $3$-edge-connected graph and let $v$ be an arbitrary vertex of $G$. Denote by $G_v(M)$ the local cubic modification of $G$ at $v$ with the chosen perfect matching $M$.

If $deg(v)=d\le 5$, then suppose to the contrary there is an edge cut $C=\{e,f\}$ of size 2 in $G_v(M)$. Let $A$ and $B$ denote different non-empty connected components of $G_v(M)-C$.

Suppose $A=\{v_{j_1},v_{j_2},\dots v_{j_\ell}\}\subseteq \{v_1,v_2,\dots,v_d\}$. Now $|A|\le 2$ since $e$ and $f$ must contain the edges of $M$ incident to $v_{j_i}$ for all $i\in\{1,2,\dots,\ell\}$. On the other hand, $e$ and $f$ must belong to the new cycle $C_v$ otherwise all of the vertices $\{v_1,v_2,\dots,v_d\}$ would be in the same connected component which cannot happen since in that case $|A|=d\ge 3$. This is a contradiction, hence both $A$ and $B$ have a vertex outside of $\{v_1,v_2,\dots,v_d\}$.

Assume $A,B \nsubseteq \{v_1,v_2,\dots,v_d\}$, and choose two vertices $y$ and $z$ from $A\setminus \{v_1,v_2,\dots,v_d\}$ and $B\setminus \{v_1,v_2,\dots,v_d\}$ respectively. Since $G$ is 3-edge-connected, there are at least 3 edge-disjoint paths between $y$ and $z$ by Menger's theorem. Therefore there exists a path $P^{yz}$ disjoint from $\{e,f\}$ in $G$. If $P^{yz}$ does not go through $v$, then the same path exists in $G_v(M)$, a contradiction. On the other hand, if $P^{yz}$ passes through $v$, then we can complete $P^{yz}$ to $P^{yz}_v$ in $G_v(M)$ by connecting the corresponding $v_i$ and $v_j$ ($i$ and $j$ are not necessarily distinct) vertices using the cycle $C_v$.
This completion can be done unless both $e$ and $f$ are edges of $C_v$.
In that latter case, $v$ would be a cut vertex in $G$,
which leads to contradiction since $deg(v)\le 5$ and we could find an edge cut of size at most 2 in $G$. Moreover, this argument shows that $G_v(M)$ is 3-edge-connected unless $v$ is a cut vertex of $G$.

What can we do if $v$ is indeed a cut vertex of $G$?

Denote by $K_1,K_2,\dots,K_k$ the non-empty connected components of $G-v$, where $k\ge2$. Observe that both edges of the edge cut $C$ must belong to the cycle $C_v$ because otherwise the vertices $v_1,v_2,\dots,v_d$ are in the same connected component, thus $C$ leads to an edge-cut of size at most 2 in $G$, a contradiction. Hence $G_v(M)-C$ is not connected if and only if $M$ is chosen such that for all $i\in\{1,2,\dots,k\}$ the vertices of $K_i\cap N_G(v)$ are connected to vertices from the same arc of the two arcs of $C_v-C$. We claim that $M$ can be chosen so that for any pair of edges of $C_v$ the previous condition fails.

Since $G$ is $3$-edge-connected, $|K_i \cap N_G(v)|\ge 3$ for any $i\in \{1,2,\dots,k\}$. One can interpret the choice of $M$ as an assignment of the vertices of $C_v$ to the corresponding connected components of $G-v$. The assignment shown in Figure \ref{fig:joM} fulfills that no matter how we select two edges $\{e',f'\}$ of $C_v$ there exists an $i\in\{1,2,\dots,k\}$ such that the vertices assigned to $K_i$ intersects both arcs of $C_v-\{e',f'\}$.

\begin{figure}[!h]
    \centering
    \includegraphics[width=0.4\textwidth]{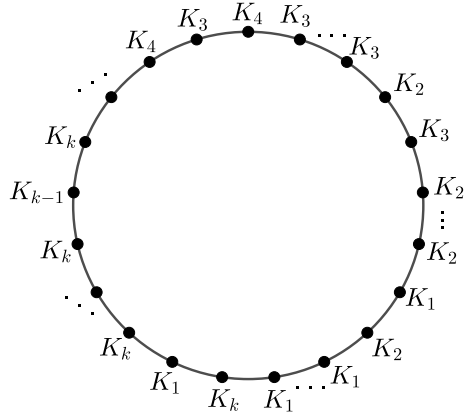}
    \caption{This assignment shows that $M$ can be chosen such that $G_v(M)$ remains $3$-edge-connected even if $v$ is a cut vertex.}
    \label{fig:joM}
\end{figure}

Therefore in the rest of the paper, we assume the local cubic modification $G_v$ of the $3$-edge-connected graph $G$ at vertex $v$ is always $3$-edge-connected. However, it is true that every cubic extension of a graph can be realized as a series of local cubic modifications, and in the other direction if we perform a series of local cubic modifications of a graph at all vertices of degree at least 4, then we get a cubic extension. Consequently, the previous observation means that one can find a $3$-edge-connected cubic extension of a $3$-edge-connected graph even if there are cut vertices.

The following general observation plays a key role in the next proofs when applied to local cubic modification.

\begin{fact}\label{fact:supress2}
Let $G_v$ be the local cubic modification of $G$ at $v$, and an orientation $O_v$ is given such that there exists a directed $(y,z)$-path $P^{yz}_v$ in $O_v$ for $\{y,z\}\nsubseteq \{v_1,v_2,\dots,v_d\}$. Now a directed $(y,z)$-path also exists for the inherited orientation $O$ of $G$ for the corresponding $(y,z)$ pair.
\end{fact}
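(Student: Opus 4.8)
The plan is to realize $G$ as the graph obtained from $G_v$ by contracting the cycle $C_v$ back to a single vertex, and then to push the directed path $P^{yz}_v$ through this contraction, using the elementary fact that a directed walk between two vertices contains a directed path between the same two vertices.

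First I would make the inherited orientation precise. Let $\varphi\colon V(G_v)\to V(G)$ be the map sending each of $v_1,\dots,v_d$ to $v$ and fixing every other vertex. The edges of $G_v$ come in two kinds: the edges of the cycle $C_v$, and the edges $v_jx_i$ that replaced the original edges $vx_i$. Contracting $C_v$ collapses the cycle edges to a loop at $v$ (which is discarded) and identifies each $v_jx_i$ with $vx_i$; so $G$ is recovered, and the inherited orientation $O$ is defined by giving $vx_i$ the direction that $O_v$ gave to $v_jx_i$. With this convention, any arc $(a,b)$ of $O_v$ with $\varphi(a)\neq\varphi(b)$ maps to an arc $(\varphi(a),\varphi(b))$ of $O$, while the arcs with $\varphi(a)=\varphi(b)$ are exactly the arcs lying inside $C_v$.

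Next I would apply $\varphi$ vertex by vertex to $P^{yz}_v = (y = w_0, w_1, \dots, w_\ell = z)$. Each maximal subpath of $P^{yz}_v$ contained in $\{v_1,\dots,v_d\}$ is collapsed to the single vertex $v$; every other step $w_tw_{t+1}$ maps to a genuine arc $(\varphi(w_t),\varphi(w_{t+1}))$ of $O$ by the previous paragraph. Hence, after deleting consecutive repetitions, $\varphi(w_0),\dots,\varphi(w_\ell)$ is a directed walk in $O$ from $\varphi(y)$ to $\varphi(z)$. Since $\{y,z\}\nsubseteq\{v_1,\dots,v_d\}$, at most one of $y,z$ is a gadget vertex, so this walk runs between the claimed corresponding pair rather than degenerating to a single vertex. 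Finally, a directed walk from $\varphi(y)$ to $\varphi(z)$ contains a directed $(\varphi(y),\varphi(z))$-path, which is what we wanted.

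I do not expect a real obstacle, which is why the statement is only a Fact: contraction is monotone on directed reachability. The only point needing care is the bookkeeping when $P^{yz}_v$ enters and leaves the gadget several times, or starts or ends inside it — one should check that each entry arc $(x_i,v_j)$ and each exit arc $(v_k,x_\ell)$ really does become a legitimate arc $(x_i,v)$ resp.\ $(v,x_\ell)$ of $O$, which is immediate from the definition of $O$ since contraction never reverses an arc. The degenerate case where both endpoints lie in $\{v_1,\dots,v_d\}$ is exactly the one ruled out by the hypothesis, so no separate argument is required.
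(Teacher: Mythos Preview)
Your argument is correct and is essentially the same idea the paper has in mind: the paper states this Fact without proof, but in the proof of Lemma~\ref{lem:lcm} it alludes to the method as ``contract the part between the first and last appearance of some $v_i$,'' which is precisely your contraction map $\varphi$ followed by shortcutting the resulting walk to a path. Your write-up is simply a more careful formalization of that one-line remark.
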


Now we are ready to show that a local cubic modification cannot decrease the Frank number.
Moreover, if the vertex $v$ has degree 3, then it cannot increase either. Hence in that case, the Frank number remains the same.

\begin{lemma} \label{lem:lcm}
Let $G$ be a $3$-edge-connected graph.
If $G_v$ is a local cubic modification of $G$ at $v$, then $F(G_v)\ge F(G)$.
\end{lemma}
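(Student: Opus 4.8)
The plan is to show that from any family of $k$ orientations of $G_v$ witnessing $F(G_v)=k$, we can construct a family of $k$ orientations of $G$ in which every edge of $G$ is deletable in at least one of them, which forces $F(G)\le k=F(G_v)$. The central idea is the \emph{inherited orientation}: given an orientation $O_v$ of $G_v$, every edge $vx_i$ of $G$ corresponds to an edge $v_jx_i$ of $G_v$, so we orient $vx_i$ in $G$ the same way (toward or away from $v$) as $v_jx_i$ is oriented toward or away from $v_j$ in $O_v$; all edges of $G$ not incident to $v$ keep their orientation from $O_v$. This is well defined because contracting the cycle $C_v$ back to $v$ identifies $E(G)$ with $E(G_v)\setminus E(C_v)$.

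First I would check that if $O_v$ is strongly connected then so is the inherited orientation $O$ of $G$: this is exactly Fact~\ref{fact:supress2} applied with $\{y,z\}$ ranging over all pairs of vertices of $G$ other than the contracted vertex, plus a short separate argument for paths into and out of $v$ itself (a directed path in $O_v$ from some $x_i$ to some $v_j$ can be truncated at the first vertex of $C_v$ it meets and then projected, and symmetrically for paths out of $C_v$). Second, and this is the heart of the argument, I would show that if an edge $e=vx_i$ (equivalently $v_jx_i$) is deletable in $O_v$, then $e$ is deletable in $O$. By Proposition~\ref{lem:str_con_check}, deletability of the arc, say $(v_j,x_i)$, in $O_v$ means there is a directed path $P$ from $v_j$ to $x_i$ in $O_v-e$. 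This path leaves $C_v$ at some point — it must, since $x_i\notin C_v$ — so let $w$ be the last vertex of $C_v$ on $P$ and $(w, x_\ell)$ the arc by which $P$ exits $C_v$; then the subpath of $P$ from $w$ to $x_i$ avoids $C_v$ entirely, so by Fact~\ref{fact:supress2} it projects to a directed $(v,x_i)$-path in $O-e$ (note $w$ and $v_j$ both project to $v$, and the projected edge $v x_\ell$ is not $e$ since $x_\ell\ne x_i$ as $x_i\notin C_v$ lies past $w$), witnessing deletability of $e$ in $O$ via Proposition~\ref{lem:str_con_check}. For edges $e$ of $G$ not incident to $v$: if $e$ is deletable in $O_v$ there is a directed $(u,u')$-path in $O_v-e$ between its endpoints, and the same endpoint-projection argument (the endpoints are not in $C_v$) using Fact~\ref{fact:supress2} gives a directed path in $O-e$, so $e$ is deletable in $O$.

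Putting this together: take orientations $O_v^1,\dots,O_v^k$ of $G_v$ realizing $F(G_v)=k$, and let $O^1,\dots,O^k$ be the inherited orientations of $G$. Each is strongly connected by the first step. Given any edge $e$ of $G$, its corresponding edge in $G_v$ is deletable in some $O_v^t$, and by the second step $e$ is then deletable in $O^t$. Hence the $k$ inherited orientations witness $F(G)\le k$, i.e. $F(G)\le F(G_v)$, which is the claim. The main obstacle, and the step deserving the most care, is the deletability transfer for the edge $e=vx_i$ itself: one must argue that the witnessing $(v_j,x_i)$-path in $O_v-e$ genuinely escapes the cycle $C_v$ and that the arc by which it escapes projects to an arc of $G$ distinct from $e$, so that the projected path lives in $O-e$ and not merely in $O$; everything else reduces cleanly to Fact~\ref{fact:supress2} and Proposition~\ref{lem:str_con_check}.
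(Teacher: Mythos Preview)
Your proposal is correct and follows essentially the same approach as the paper: define the inherited orientation by contracting $C_v$ back to $v$, transfer strong connectivity via Fact~\ref{fact:supress2}, and transfer deletability edge by edge using Proposition~\ref{lem:str_con_check}. The paper treats all edges $e=yz$ of $G$ uniformly (``contract the part between the first and last appearance of some $v_i$'' in the witnessing path), whereas you split into the cases $e$ incident to $v$ versus not and spell out more carefully why the exit arc from $C_v$ cannot project to $e$; this extra care is sound but not a different method.
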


\begin{proof}
Suppose to the contrary that $F(G_v)=k<F(G)$ witnessed by the strongly connected orientations $O^v_1,\dots,O^v_k$.
Let $O_1,\dots , O_k$ be the orientations of $G$, which coincide with $O^v_1,\dots,O^v_{k}$ on identical edges.
Also let the direction of $v_jx_i$ be copied to $vx_i$ in each orientation.
Since each $O^v_j$ was strongly connected, for any pair of vertices $y,z$ there exists a directed path between them in both directions. By Fact~\ref{fact:supress2}, we can deduce that $O_j$ also has the same property hence it is strongly connected.

We claim each edge $e=yz$ of $G$ is deletable in at least one orientation. Let $O^v_j$ be the orientation of $G_v$, where $e$ with the appropriate orientation (say $(y,z)$)  was deletable. We know that $O^v_j$ is strongly connected and contains a directed $(y,z)$-path $P^v_{yz}$ in $O^v_j-\{e\}$. Consequently, similarly to the proof of Fact \ref{fact:supress2}, $O_j-\{e\}$ contains a directed $(y,z)$-path since in $P^v_{yz}$ we can contract the part between the first and last appearance of some $v_i$ for an appropriate $i$. Therefore $e$ is deletable in $O_j$ by Proposition \ref{lem:str_con_check}.
\end{proof}

\begin{corollary} \label{cubicext}
Let $G$ be a $3$-edge-connected graph. There exists a cubic extension $H$ of $G$, which is $3$-edge-connected and $F(H)\ge F(G)$.
\end{corollary}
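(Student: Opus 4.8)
The plan is to produce $H$ by eliminating the vertices of degree at least $4$ one at a time, each time invoking Lemma~\ref{lem:lcm} together with the $3$-edge-connectivity preservation established above. If $G$ is already cubic then $H=G$ works, so assume $G$ has a vertex $v$ of degree $d\ge 4$. First I would perform a local cubic modification at $v$, choosing the perfect matching $M$ as in the discussion preceding Fact~\ref{fact:supress2}: an arbitrary $M$ keeps the graph $3$-edge-connected when $v$ is not a cut vertex, and the assignment of Figure~\ref{fig:joM} does so when $v$ is a cut vertex. In either case $G_v(M)$ is $3$-edge-connected, and by Lemma~\ref{lem:lcm} we have $F(G_v(M))\ge F(G)$.

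The decisive bookkeeping observation is that this operation removes $v$, creates the $d$ new vertices $v_1,\dots,v_d$ of degree exactly $3$, and leaves the degree of every other vertex unchanged (each former neighbour $x_i$ of $v$ merely trades its edge to $v$ for an edge to one of the $v_j$). Hence each modification decreases the number of vertices of degree at least $4$ by exactly one, and never creates a new such vertex. Iterating the construction of the previous paragraph — at each stage applying it to a current vertex of degree at least $4$, with a matching chosen so that $3$-edge-connectivity survives — therefore halts after $|\{w\in V(G):\deg_G(w)\ge 4\}|$ steps and outputs a cubic graph $H$. Since every step preserved $3$-edge-connectivity and, by repeated use of Lemma~\ref{lem:lcm}, the Frank number never decreased along the way, $H$ is $3$-edge-connected and $F(H)\ge F(G)$.

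Finally I would observe that $H$ is a cubic extension of $G$. The only vertices touched during the whole process are the original vertices of $G$ of degree at least $4$, because every cycle vertex created along the way has degree $3$ and is never modified; thus our sequence of local cubic modifications is precisely a series of such modifications carried out at all vertices of $G$ of degree at least $4$, which — as recorded in the text — yields a cubic extension of $G$.

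The one point that genuinely needs care is hidden in the first paragraph: when the iteration reaches a vertex $v$, that vertex may be a cut vertex of the \emph{current} $3$-edge-connected graph (which is exactly the situation with $\deg(v)\ge 6$), so one really must appeal to the cut-vertex case of the matching argument and not merely to the generic case. That case, however, has already been settled above, so the remainder is assembly: the degree bookkeeping that guarantees termination, and chaining the inequalities $F(G)\le F(G_v(M))\le\cdots\le F(H)$.
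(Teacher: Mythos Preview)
Your proposal is correct and follows essentially the same approach as the paper: iterate local cubic modifications at the vertices of degree at least $4$, choosing the matching so that $3$-edge-connectivity is preserved (the generic choice when $v$ is not a cut vertex, the assignment of Figure~\ref{fig:joM} when it is), and invoke Lemma~\ref{lem:lcm} at each step; the paper leaves this corollary without a separate proof, relying on exactly the discussion you cite. Your explicit degree bookkeeping (each step removes one vertex of degree $\ge 4$ and creates none) and your remark that the cut-vertex case must be applied to the \emph{current} graph are the right points to spell out.
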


By Lemma \ref{lem:lcm}, we can create an infinite family $\mathcal{G}$ of cubic graphs with $F(G)\ge 3$ for any $G\in \mathcal{G}$ starting from the Petersen graph in the following way.
Hörsch and Szigeti \cite{szi} showed the Petersen graph has Frank number 3.
Pick an arbitrary vertex $v$ of the Petersen graph, and consider the local cubic modification $G_v$ of $G$ at $v$. Since the Petersen graph is cubic and $3$-edge-connected and $G_v$ is $3$-edge-connected as well, hence by Lemma \ref{lem:lcm}, we get $F(G_v)\ge F(G)$. After iterating this local cubic modification procedure with an arbitrary vertex of the always cubic current graph, the Frank number never decreases. Thus we created an infinite family of $3$-edge-connected graphs with Frank number at least 3.

In Theorem \ref{t:Frank3}, we claimed the existence of an infinite family of cubic graphs with Frank number equal to 3. So far we have seen that the Frank number cannot decrease performing a local cubic modification at an arbitrary vertex $v$. In the next Lemma, we show that the Frank number cannot increase if $deg(v)=3$.

\begin{lemma} \label{trunc}
Let $G$ be a $3$-edge-connected graph and $v$ a vertex of degree $3$. If $G_v$ is a local cubic modification of $G$ at $v$, then $F(G_v) \le F(G)$.
\end{lemma}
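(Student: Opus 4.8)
\textbf{Proof plan for Lemma \ref{trunc}.}

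The plan is to show that any witnessing family of orientations for $G$ can be transformed into a witnessing family for $G_v$ of the same size, so $F(G_v)\le F(G)$. Write $F(G)=k$ and let $O_1,\dots,O_k$ be strongly connected orientations of $G$ such that every edge of $G$ is deletable in at least one of them. Since $\deg(v)=3$, the truncation replaces $v$ by a triangle $v_1v_2v_3$, and each edge $vx_i$ becomes $v_ix_i$ (after relabelling so that the matching $M$ sends $x_i$ to $v_i$). The natural idea is: for each $j$, build an orientation $O^v_j$ of $G_v$ by keeping the direction of every edge not incident to $v$, orienting each $v_ix_i$ the same way the corresponding edge $vx_i$ was oriented in $O_j$, and then orienting the triangle $C_v=v_1v_2v_3$ appropriately. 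Because $v$ had total degree $3$ and $O_j$ is strongly connected, $v$ is either red (two arcs out, one in) or green (two arcs in, one out) in $O_j$; orienting the triangle $C_v$ as a directed $3$-circuit then makes $O^v_j$ strongly connected. Indeed, every directed path of $O_j$ through $v$ can be rerouted through the triangle: an incoming arc at some $v_i$ and an outgoing arc at some $v_\ell$ are joined by the arc of the directed triangle from $v_i$ to $v_\ell$ (one of the two arcs of the $2$-arc path around the triangle), so Fact \ref{fact:supress2}-type reasoning in the reverse direction gives strong connectivity of $O^v_j$, and by Proposition \ref{lem:str_con_check} one checks deletability by exhibiting the required $(u,w)$-paths.

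The more delicate part is to handle the three new edges $v_1v_2,v_2v_3,v_1v_3$ of $C_v$ and to make sure every edge of $G_v$ — including the triangle edges — is deletable in some $O^v_j$. For the old edges $yz$ of $G_v$ (those with $\{y,z\}\not\subseteq\{v_1,v_2,v_3\}$): if $yz$ was deletable in some $O_j$, then $O_j-yz$ contains a directed $(y,z)$-path; this path, possibly passing through $v$, lifts to a directed $(y,z)$-path in $O^v_j-yz$ by routing through the directed triangle $C_v$ exactly as above, so $yz$ is deletable in $O^v_j$ by Proposition \ref{lem:str_con_check}. This takes care of all edges except the triangle. For the triangle edges, the key observation (already highlighted in the excerpt) is that a chord of a circuit is always deletable regardless of orientation: since $C_v$ is itself oriented as a circuit in each $O^v_j$, its edges are not chords — but we can instead use that in $O^v_j$ there is a larger circuit through two of the $v_i$'s (coming from a circuit of $O_j$ through $v$, which exists by strong connectivity of $O_j$), and relative to such a circuit the remaining triangle edge becomes a chord, hence deletable. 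So with a little care we can even arrange that all three triangle edges are deletable already in $O^v_1$, say, using a circuit of $O_1$ through $v$.

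The step I expect to be the main obstacle is the bookkeeping needed to guarantee simultaneously that (i) orienting $C_v$ as a directed triangle is compatible with whatever orientation the $v_ix_i$ inherit (it is, because the only constraint from strong connectivity at $v$ is the red/green condition, and a directed triangle on $\{v_1,v_2,v_3\}$ is consistent with either pattern), and (ii) the chosen circuit of $O_j$ through $v$ really does lift to a circuit of $O^v_j$ hitting two of the three $v_i$'s so that the third triangle edge is a genuine chord. One must also double-check the corner case where the path or circuit being lifted enters and leaves $v$ along a pair of edges whose images are $v_i$ and $v_\ell$ with the "wrong" one of the two triangle-arcs between them; but since we orient $C_v$ as a full directed $3$-circuit, both directions around the triangle are available as directed paths, so the rerouting never fails. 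Once these compatibility checks are in place, the verification that every edge of $G_v$ is deletable in at least one $O^v_j$ is routine via Proposition \ref{lem:str_con_check}, giving $F(G_v)\le k=F(G)$.
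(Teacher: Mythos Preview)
Your lifting of strong connectivity and of deletability for the ``old'' edges (including the three spoke edges $v_ix_i$) is fine and matches the paper's argument.  The gap is in your treatment of the three triangle edges.

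If you orient $C_v$ as a directed $3$-circuit, then in a given $O^v_j$ \emph{exactly one} triangle arc is deletable.  Concretely, say $v$ is red in $O_j$ with incoming arc $(x_1,v)$ and outgoing arcs $(v,x_2),(v,x_3)$, and take the triangle as $v_1\!\to v_2\!\to v_3\!\to v_1$.  Then $v_1$ has out-degree~$1$, so $(v_1,v_2)$ is not deletable (its removal makes $v_1$ a sink); $v_3$ has in-degree~$1$, so $(v_2,v_3)$ is not deletable; only $(v_3,v_1)$ survives.  The other cyclic orientation is symmetric.  Your ``chord of a larger circuit'' argument is exactly this: a circuit of $O_j$ through $v$ lifts to a circuit that either uses one triangle arc (and then neither of the other two edges is a chord, since their common endpoint is off the circuit) or uses two triangle arcs (and then the third edge is the unique chord).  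Either way you get at most one deletable triangle edge, so your claim that ``all three triangle edges are deletable already in $O^v_1$'' is false: in fact no strongly connected orientation of $G_v$ makes all three triangle edges deletable, since that would force every $v_i$ to have both in- and out-degree at least $2$.  Consequently, when $F(G)=2$ your two orientations cover at most two of the three triangle edges and the argument fails.

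The paper avoids this by \emph{not} restricting to the two cyclic orientations of the triangle.  It also allows the ``transitive'' orientation in which $v_a$ (the vertex with the external in-arc) sends both triangle arcs out; with that choice two of the three triangle arcs are deletable.  The paper then looks at a smallest subset $\mathcal{S}\subseteq\{O_1,\dots,O_k\}$ in which all three edges $vx_i$ are deletable (so $|\mathcal{S}|\in\{2,3\}$) and, using the fact that the identity of the incoming neighbour varies across $\mathcal{S}$, chooses the triangle orientation in each $O^v_{j}$ so that the deletable triangle arcs across $\mathcal{S}$ cover all three edges.  That coupling between ``which $vx_i$ is deletable in which $O_j$'' and ``how to orient the triangle in $O^v_j$'' is the missing idea in your plan.
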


\begin{proof}
Suppose the orientations $\mathcal{O}=\{O_1, O_2, \dots, O_k\}$ are the witnesses of $F(G)=k$. We create $k$ orientations $\mathcal{O}^v=\{O^v_1, O^v_2,\dots,O^v_k\}$ of $G_v$ to prove $F(G_v)\le k$. Let us focus on the truncated part of $G_v$, we just copy the orientations from the corresponding $O_i$ outside of the modified part.

Since every $O_i$ is a strong orientation, thus the 3-edge-cut formed by edges $\{av,bv,cv\}$ cannot be a directed cut.

By Fact~\ref{reverse}, we might assume that in every orientation $O_i$, exactly two edges leave $v$. For convenience, instead of referring to $a,b,c$ as the concrete neighbours of $v$, let us permute their roles. We may assume that $a$ denotes the tail of the unique arc entering $v$. In Figure~\ref{fig:truncation}, we introduce the four orientations we use later in this proof. Note that the first two orientations become the same if we interchange the roles of $b$ and $c$, and so do the last two orientations. Hence there are essentially two type of extensions which we use on the truncated part of $G_v$.

\begin{figure}[!h]
    \centering
    \includegraphics[width=0.8\textwidth]{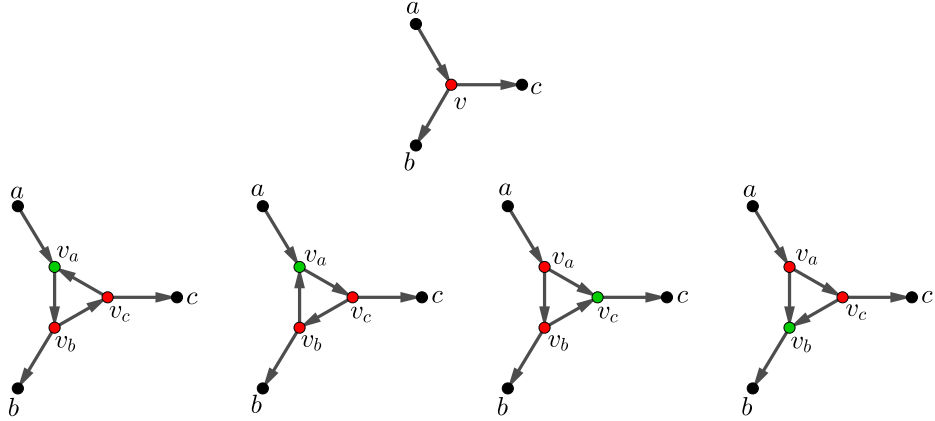}
    \caption{The four orientations we use on the new arcs (essentially two different types)}
    \label{fig:truncation}
\end{figure}

Firstly, observe that no matter which extensions we use from Figure \ref{fig:truncation}, the orientation $O^v_i$ we get is also strongly connected. Indeed, we can enter the triangle $v_a,v_b,v_c$ only from $a$ and we can leave in both directions through $b$ or $c$, hence every directed path of $O_i$ can be extended even if it goes through $v$ in $G$. Moreover, there exists a directed path between any pair of new vertices in $O^v_i$.

An arc of $O_i$ not incident to $v$ is deletable if and only if the same arc is deletable in $O^v_i$. By Proposition \ref{lem:str_con_check}, it is enough to show a directed path between its endpoints in the modified graph as well. As we discussed in the previous paragraph, this can be done and it does not depend on the choice of the orientation of the triangle at the truncated vertex $v$ as long as we use the four orientations above. Therefore for every edge not incident to $v$, there exists an orientation $O^v_i$ of $G_v$ so that the corresponding arc is deletable in $O^v_i$.

Choose a smallest subset $\mathcal{S}=\{O_{j_1},O_{j_2},\dots,O_{j_{\ell}}\}$ of $\mathcal{O}$ such that all of the edges incident to $v$ is deletable in at least one of the orientations in $\mathcal{S}$. Here $1<\ell\le 3$ holds.

If $|\mathcal{S}|=2$, then in at least one of these orientations both arcs leaving $v$ are deletable and in the other orientation the third edge incident to $v$ is not just outgoing but also deletable. In Figure \ref{fig:trun2}, we show how the orientations $\{O^v_{j_1},O^v_{j_2}\}$ look like at the truncated vertex $v$ (remember that the role of $b$ and $c$ are interchangeable). Notice that the blue color and also the X marks (for the black and white versions) indicate which arcs are deletable.

\begin{figure}[!h]
    \centering
    \includegraphics[width=0.5\textwidth]{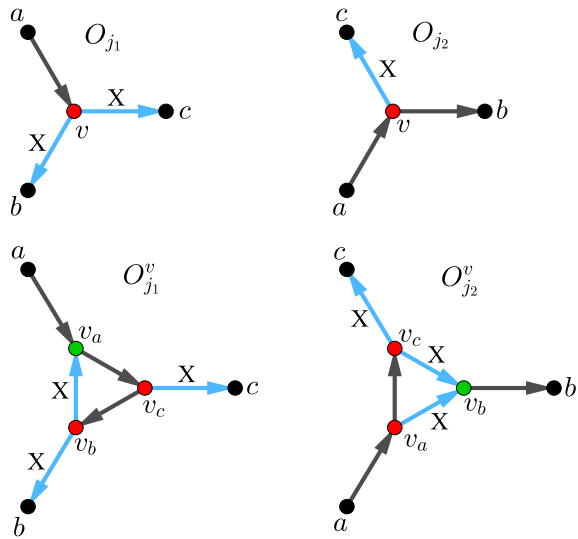}
    \caption{The orientations $\{O^v_{j_1},O^v_{j_2}\}$, if $|\mathcal{S}|=2$}
    \label{fig:trun2}
\end{figure}

Indeed, the arcs of type $(v_x,x)$ are deletable in $O^v_{j_i}$ if and only if $(v,x)$ was deletable in $O_{j_i}$.
The arcs inside the triangle of type $(v_x,v_y)$ are deletable either trivially or because of the fact that $O_{j_i}$ is strongly connected.

If $|\mathcal{S}|=3$, then for each of the edges incident to $v$ there is a unique orientation of $\mathcal{S}$ so that the corresponding arc is deletable. Using any of the last two orientations in Figure \ref{fig:truncation} results in three orientations for which every arc of the triangle is also deletable in at least one of them. Indeed, the arc opposite to the deletable one which leaves $v$ is always deletable by Proposition \ref{lem:str_con_check} since there is a directed path within the triangle.

Naturally, we can use any of the orientations described in Figure \ref{fig:truncation} in any of those orientations of $\mathcal{O}$ which haven't been touched yet. Hence we proved that $F(G_v)\le F(G)$.
\end{proof}

\begin{corollary}
Lemma \ref{lem:lcm} and Lemma \ref{trunc} together implies that if a $3$-edge-connected graph $G$ contains at least one vertex of degree $3$, then by successively performing a local cubic modification at vertices of degree $3$ we get a family of graphs with the same Frank number as $G$. Notice that in each step, the newly introduced vertices have degree $3$.
\end{corollary}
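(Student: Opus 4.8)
The plan is to combine the two preceding lemmas and then iterate. First I would take a $3$-edge-connected graph $G$ together with a vertex $v$ of degree $3$ and form a local cubic modification $G_v$. Since $G$ is $3$-edge-connected it has no cut vertex, so the discussion preceding Fact~\ref{fact:supress2} guarantees that $G_v$ is again $3$-edge-connected; moreover for $d=3$ the matching $M$ defining $G_v$ is essentially unique, so there is really only one graph $G_v$ to consider. Applying Lemma~\ref{lem:lcm} gives $F(G_v)\ge F(G)$, and applying Lemma~\ref{trunc} (valid precisely because $\deg(v)=3$) gives $F(G_v)\le F(G)$. Hence $F(G_v)=F(G)$.

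Next I would observe that the operation reproduces its own hypotheses: the three new vertices $v_1,v_2,v_3$ of the triangle $C_v$ each have degree $3$ in $G_v$, all other degrees are unchanged, and $G_v$ is $3$-edge-connected. Therefore we may select any degree-$3$ vertex $w$ of $G_v$ — an old one or one of the three new ones — and repeat the argument verbatim, obtaining $F((G_v)_w)=F(G_v)=F(G)$, again with a $3$-edge-connected graph all of whose newly introduced vertices have degree $3$. A straightforward induction on the number of modifications performed then shows that every graph reachable from $G$ by a finite sequence of local cubic modifications at degree-$3$ vertices is $3$-edge-connected and has Frank number exactly $F(G)$. Since each modification strictly increases the number of vertices, the family so obtained is infinite.

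I do not expect a genuine obstacle here: the proof is a direct combination of Lemma~\ref{lem:lcm} and Lemma~\ref{trunc} followed by induction. The only two points deserving a word of care are that $3$-edge-connectivity is preserved at every step — already established above, using that a $3$-edge-connected graph has no cut vertex — and that the induction hypothesis must carry along both ``$3$-edge-connected'' and ``the vertices we are still allowed to modify have degree $3$''; the latter is automatic, since a local cubic modification at a degree-$3$ vertex creates only degree-$3$ vertices and leaves every other degree intact.
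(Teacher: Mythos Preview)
Your proposal is correct and matches the paper's treatment: the paper states this as an immediate corollary with no separate proof, and your argument---apply Lemma~\ref{lem:lcm} and Lemma~\ref{trunc} to a degree-$3$ vertex to get $F(G_v)=F(G)$, observe that the new vertices have degree~$3$ and $3$-edge-connectivity persists, then induct---is exactly the intended one-line justification. One small wording issue: ``$3$-edge-connected implies no cut vertex'' is false in general (two copies of $K_4$ glued at a vertex is a counterexample); what is true, and what you actually need, is that a \emph{degree-$3$} vertex of a $3$-edge-connected graph cannot be a cut vertex, or equivalently you can cite the paper's $d\le 5$ discussion directly.
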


Thus if we start with the Petersen graph, we can build a family of graphs with Frank number exactly 3 concluding the proof of Theorem \ref{t:Frank3}.
However, if a graph $H$ contains a triangle $T$, then we can contract the vertices of $T$ into a new vertex $v_T$ (or in other words identify these vertices) such that $v_T$ is adjacent to the other neighbours of the three vertices of $T$, thus the resulting graph $H/T$ is simple (since $H$ was cubic) and cubic.

What can we say about the relation between the Frank number of $H$ and $H/T$?

Since $H$ is a local cubic modification of $H/T$ at $v_T$, we get $F(H)\ge F(H/T)$ by Lemma \ref{lem:lcm}. On the other hand, Lemma \ref{trunc} yields that $F(H/T)\le F(H)$ since $v_T$ is a vertex of degree 3 in $H/T$. Hence $F(H)=F(H/T)$. Consequently, we can contract triangles starting from $H$ until the resulting graph $H^{\ast}$ is either triangle-free or $H^{\ast}\simeq K_4$ while the Frank number remains the same.
We know that $F(K_4)=2$, and $F(H^{\ast})\ge 2$ if $H^{\ast}$ is a $3$-edge-connected cubic triangle-free graph.

\begin{proof}[Proof of Theorem \ref{triangle-free}]
By Corollary \ref{cubicext}, we can consider the cubic extension $H$ of $G$ for which $F(H)\ge F(G)$. Then after successively contracting triangles the resulting graph $H^{\ast}$ is either triangle-free or it is $K_4$ while $F(H^{\ast})=F(H)$. Since $F(G)\ge3$ thus $H^{\ast}=K_4$ is a contradiction, hence we get a $3$-edge-connected cubic triangle-free graph $H^{\ast}$ such that $F(H^{\ast})\ge F(G) \ge 3$.
\end{proof}

This result may help the computer aided search for other $3$-edge-connected graphs with higher Frank number.

\section{Snarks} \label{snark}

Snarks are bridgeless cubic graphs with chromatic index 4.
The Petersen graph is the smallest such graph.
Hörsch and Szigeti \cite{szi} proved each 3-edge-connected, 3-edge-colorable graph has Frank number at most 3, and the Petersen graph has Frank number 3.
Therefore, we expected to find other examples with Frank number 3 among snarks.

In this section, we investigate the second smallest snarks that are the Blanu\s a snarks and an infinite family of snarks, the so-called flower snarks. For every odd $n\ge 3$ let $J_n$ denote the flower snark on $4n$ vertices. One can construct this graph starting with $n$ copies of stars on 4 vertices with centers $v_1,v_2,\dots,v_n$ and outer vertices denoted by $\{a_i,b_i,c_i\}$ for $1\le i \le n$. Then add an $n$-cycle on the vertices $(a_1,a_2,\dots,a_n)$, and a $2n$-cycle on $(b_1,b_2,\dots,b_n,c_1,c_2,\dots,c_n)$.

It turns out that the Frank number of each of these snarks is 2. The proofs for the two types of snarks are very similar, and we handle them together.

\begin{theorem} \label{t:blaflower}
Both Blanu$\check{s}$\hspace{-1pt}a snarks, and every flower snark has Frank number $2$.
\end{theorem}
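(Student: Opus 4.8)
The plan is to exhibit, for each graph in the family, two strongly connected orientations such that every edge is deletable in at least one of them; since each such graph is 3-edge-connected and not 3-edge-colorable (snarks are 4-edge-chromatic), the Frank number is at least 2, so two orientations suffice to prove equality. I would treat the flower snarks $J_n$ and the two Blanuša snarks in parallel, since their structure is similar: each consists of several small "blocks" joined cyclically, and one can orient most of the graph so that large circuits appear, forcing chords to be deletable regardless of orientation (using the repeatedly-stated observation that every chord of a circuit in a strongly connected orientation is deletable). For $J_n$, the natural first orientation is to make the outer $n$-cycle $(a_1,\dots,a_n)$ into a circuit and the $2n$-cycle $(b_1,\dots,b_n,c_1,\dots,c_n)$ into a circuit, then orient the spoke edges $v_ia_i, v_ib_i, v_ic_i$ so that each center $v_i$ becomes red or green as needed; the spokes become deletable via short detours along the two big circuits, and roughly half the cycle arcs become deletable because they can be replaced by a detour through a star. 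The second orientation is obtained by a symmetric modification — reversing the spokes, or reversing one of the two long cycles — so that the previously non-deletable arcs now become deletable.

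The key steps, in order, are: (1) describe the first orientation explicitly on $J_n$, state which vertices are red/green, and verify strong connectivity by showing how to route a path between any two vertices using the two long circuits and the spokes; (2) identify precisely the set of arcs deletable in the first orientation — this should be all spokes plus every second arc on each long cycle, via Proposition~\ref{lem:str_con_check}, pointing to the explicit replacement path for each type; (3) because $n$ is odd, the parity obstruction on the $n$-cycle must be handled exactly as in the wheel and prism proofs, by introducing one or two "exceptional" arcs oriented against the circuit and a correspondingly modified circuit through all but one vertex; (4) describe the second orientation and check that every arc not covered in step (2) — including the exceptional ones — is deletable there; (5) finally, do the same for the two Blanuša snarks, but since these are single fixed graphs on $18$ vertices, this reduces to presenting two explicit orientations (most cleanly via a Hamiltonian circuit, after which all remaining $9$ edges are automatically deletable) and checking the at most a handful of remaining edges by hand or by exhibiting the replacement paths; a figure for each Blanuša snark would carry most of the weight here.

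I expect the main obstacle to be the odd-cycle parity issue in $J_n$: alternately orienting the spokes around an odd cycle cannot make the colors alternate perfectly, so a couple of vertices will violate the clean red/green pattern, and near those vertices a few arcs fail to be deletable in the first orientation. The fix is to build, for the second orientation, a large circuit that deliberately avoids one center vertex (much like the circuit $v_1,v_2,v_{n/2+2},\dots$ constructed in the Möbius ladder proof) and to add three or four carefully oriented "special" arcs whose replacement paths are spelled out individually; verifying that this second orientation is strongly connected and that it covers every arc left uncovered by the first is the delicate bookkeeping. Everything else — strong connectivity of the first orientation, deletability of chords, deletability of spokes — follows routinely from Proposition~\ref{lem:str_con_check} and the chord observation, and does not depend on $n$.
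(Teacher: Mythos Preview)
Your overall strategy---exhibit two strongly connected orientations covering all edges---is exactly what the paper does, and your plan for the flower snarks (orient the two long cycles as circuits, orient the star edges to control the red/green pattern, then adjust for the odd-cycle parity defect) matches the spirit of the paper's construction, which is carried by a figure. Two points, however, need correction.

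First, your lower bound argument is not valid. Failing to be $3$-edge-colorable has no bearing on whether the Frank number exceeds~$1$; the H\"orsch--Szigeti result only gives an \emph{upper} bound of~$3$ for $3$-edge-colorable graphs. The correct reason that $F\ge 2$ is the one the paper gives: $F(G)=1$ would mean a single orientation in which every arc is deletable, i.e.\ a $2$-arc-connected orientation, which by Nash--Williams (Theorem~\ref{t:NW}) requires $G$ to be $4$-edge-connected. Cubic graphs are not $4$-edge-connected, so $F\ge 2$. This is an easy fix, but as written your justification is a non sequitur.

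Second, and more seriously, your plan for the Blanu\v{s}a snarks has a concrete obstruction: both Blanu\v{s}a snarks are hypohamiltonian, hence \emph{not} Hamiltonian, so the step ``most cleanly via a Hamiltonian circuit, after which all remaining $9$ edges are automatically deletable'' cannot be carried out. You will have to do what the paper does: write down two explicit strongly connected orientations (the paper presents them in Figures~\ref{fig:Bla1} and~\ref{fig:Bla2}), identify the deletable arcs in each by hand using Proposition~\ref{lem:str_con_check}, and check that together they cover every edge. There is no structural shortcut of the Hamiltonian type here; the verification is an ad hoc finite check on two $18$-vertex graphs.
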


\begin{proof} 
Since these snarks are not 4-edge-connected, therefore they do not admit a 2-arc-connected orientation by Theorem~\ref{t:NW}.
Hence their Frank number must be greater than $1$. On the other hand, we show two strongly connected orientations $\{O_1,O_2\}$ of these snarks in Figures \ref{fig:Bla1}, \ref{fig:Bla2}, \ref{fig:flower} that verify that their Frank number is at most 2, which concludes the proof.

\begin{figure}[!h]
    \centering
    \includegraphics[width=\textwidth]{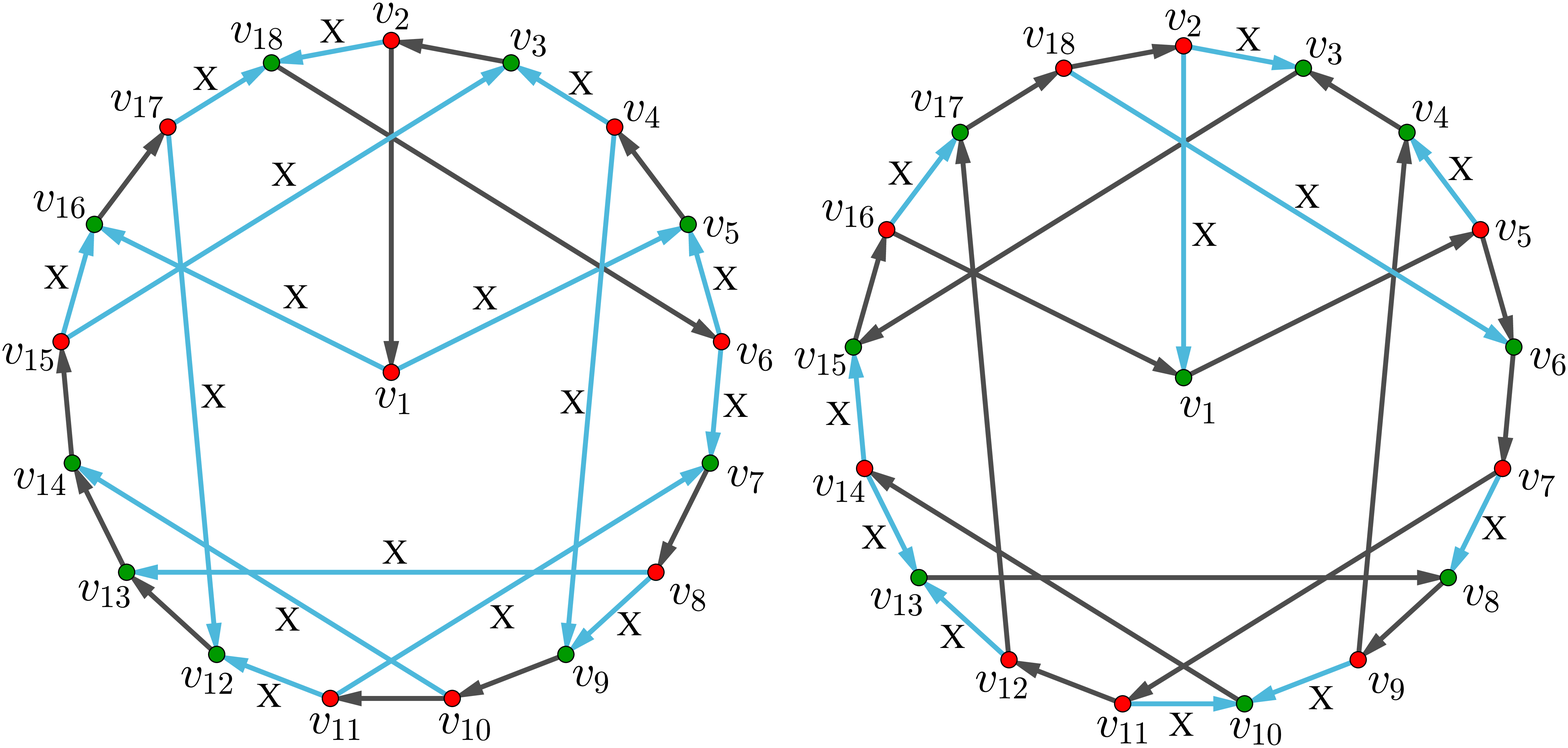}
    \caption{The first Blanu\s a snark has Frank number 2}
    \label{fig:Bla1}
\end{figure}

\begin{figure}[!h]
    \centering
    \includegraphics[width=\textwidth]{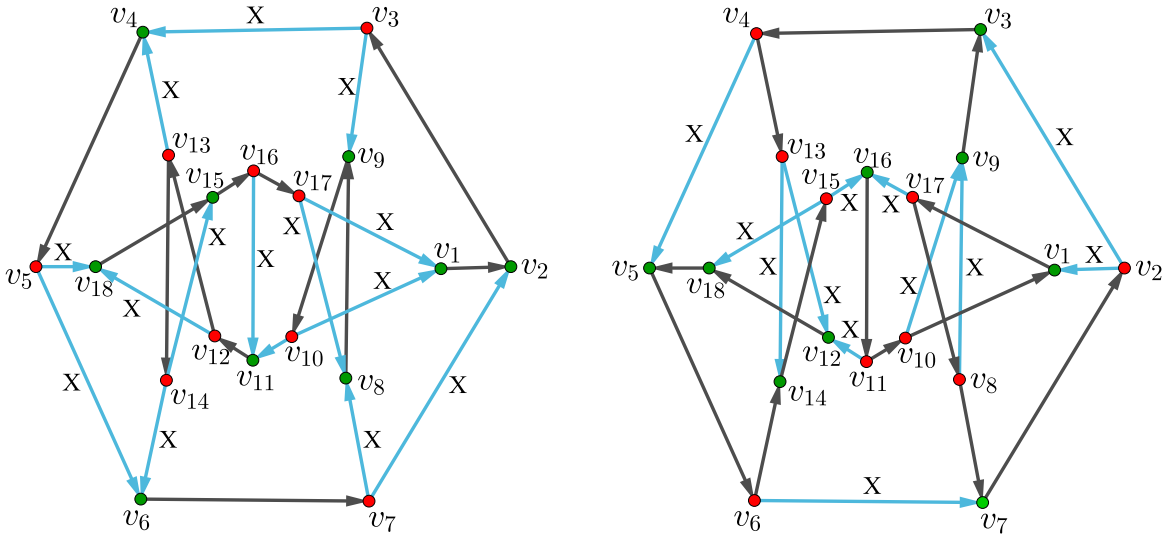}
    \caption{The second Blanu\s a snark has Frank number 2}
    \label{fig:Bla2}
\end{figure}

\begin{figure}[!h]
    \centering
    \includegraphics[width=0.8\textwidth]{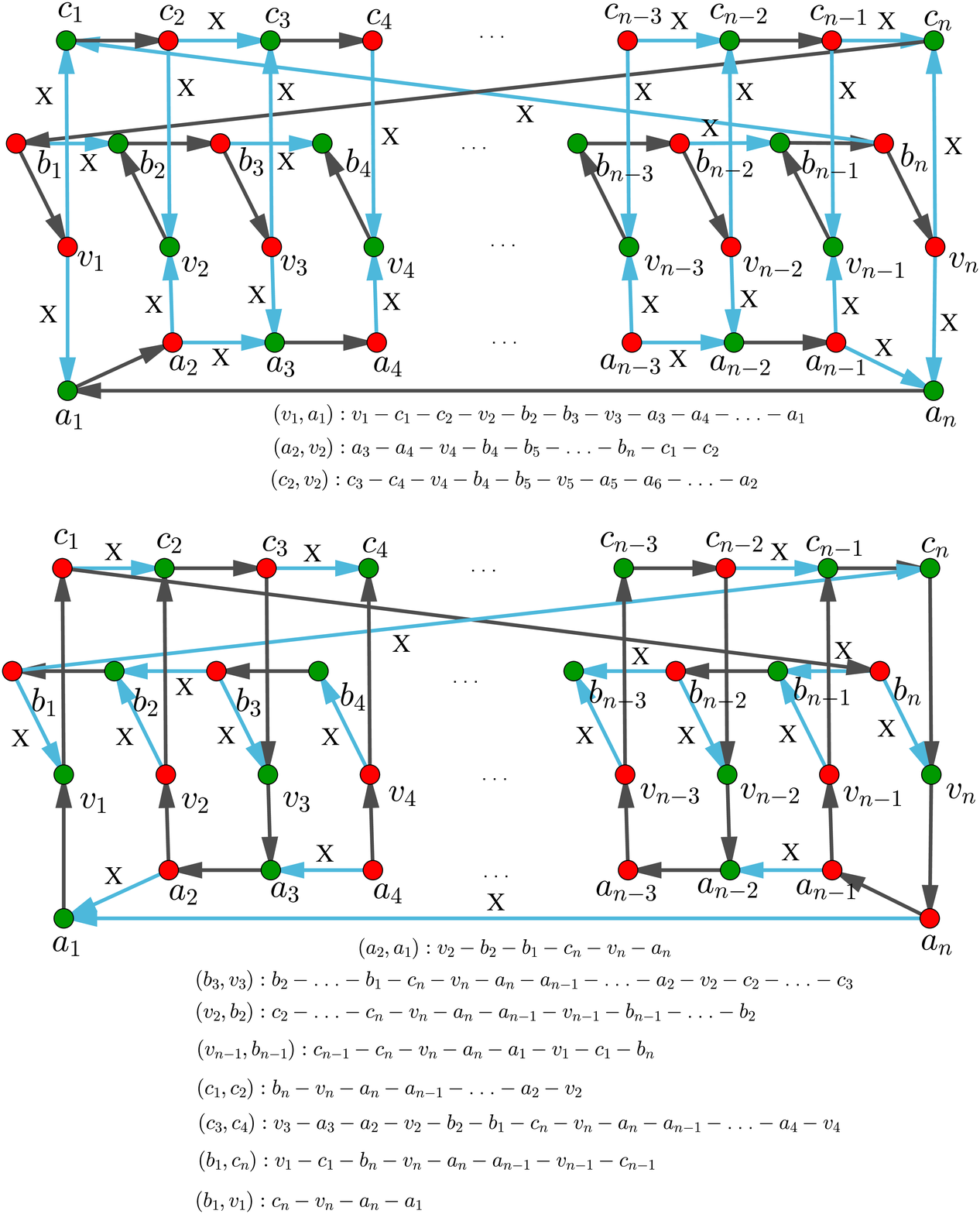}
    \caption{$F(J_n)=2$, for any odd $n$}
    \label{fig:flower}
\end{figure}

The first thing is to check that these orientations are indeed strongly connected. To see this, observe in each orientation, each vertex is covered by a circuit. For any two vertices
there is a chain of intersecting circuits covering these vertices, hence there is a directed path between them in both directions.

To prove that an arc $(u,v)$ is deletable, it is enough to find a directed path from $u$ to $v$ after the deletion of $(u,v)$ by Proposition~\ref{lem:str_con_check}.

In Figures \ref{fig:Bla1}, \ref{fig:Bla2}, \ref{fig:flower} the blue arcs (also marked by X) indicates the deletable arcs of the corresponding orientations. Some hints are included in Figure \ref{fig:flower} which can be generalized for an arbitrary flower snark $J_n$. However, for the two Blanu\s a snarks, there is no general rule (other than using the still intact circuits) for deciding whether an arc is deletable or not, one should manually check them. But finding the appropriate directed path after the deletion is usually straightforward due to the small degrees of the vertices.
\end{proof}

\section{Small graphs}\label{sec:small}

Hörsch and Szigeti \cite{szi} showed the Petersen graph has Frank number 3.
We complement their result and show that any other $3$-edge-connected cubic graph on at most 10 vertices has Frank number at most 2.
We used the House of Graphs \cite{hog} and nauty \cite{nauty} to determine all the candidates.
There are two such graphs on 6 vertices.
One of them is $M_6$, and the other one is the $3$-prism.
Both of them have Frank number 2.

There are four $3$-edge-connected, cubic graphs on 8 vertices.
The cube, $M_8$, the $3$-prism with a handle, and one more.
Figures~\ref{n8_1},~\ref{n8_2} indicate the orientations, which show that the Frank number is 2 for each of these graphs.

\begin{figure}[!ht]
    \centering
    \includegraphics[width=0.6\linewidth]{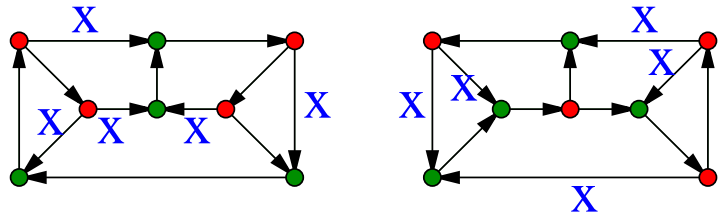}
    \caption{The two orientations of the $3$-prism with a handle}
    \label{n8_1}
\end{figure}

\begin{figure}[!ht]
    \centering
    \includegraphics[width=0.9\linewidth]{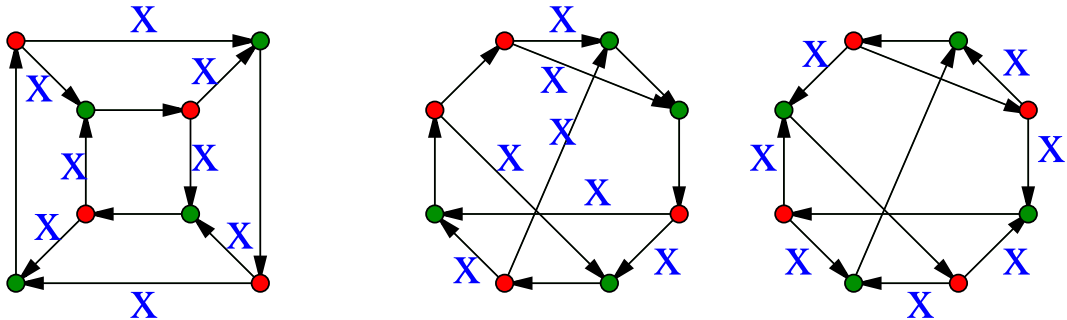}
    \caption{Orientations showing Frank number 2}
    \label{n8_2}
\end{figure}

There are fourteen $3$-edge-connected cubic graphs on 10 vertices, see Figure~\ref{n10}.
Some of them can be constructed from $K_4$ by consecutive truncations.
Since $K_4$ has Frank number 2, we deduce $G_5,G_{11},G_{12}$ have also Frank number 2.
We can obtain $G_{13}$ from $M_6$, and $G_8$ from the cube by consecutive truncations.
Our previous results on wheels and prisms imply $G_2$ and $G_3$ have Frank number 2.
We obtain $G_4,G_7$ from a wheel by truncation.
The Petersen graph, $G_{10}$ is the only graph with Frank number 3.

For the remaining graphs $G_1,G_9,G_{14}$, we always direct the edges of the Hamiltonian cycle as a circuit $v_0,v_1,\dots,v_9$, thereby making the 5 chords deletable.

For $G_1$, in the first orientation, we fix the direction of the long diagonal and we orient the other diagonals to get red and green vertices alternately on the circuit. Now each diagonal is deletable and every second edge of the Hamiltonian cycle. Therefore, by reversing the diagonals, we get an orientation, where the remaining arcs are deletable.

For $G_{14}$, in the first orientation, we add the arcs $(v_9,v_2)$, $(v_1,v_4)$, $(v_3,v_7)$, $(v_5,v_8)$, $(v_6,v_0)$.
We check that the following arcs are deletable: $(v_1,v_2)$, $(v_3,v_4)$, $(v_6,v_7)$, $(v_9,v_0)$.
In the second orientation, we create the following Hamiltonian  circuit: $v_0,v_6,v_5,v_8,v_7,v_3,v_4,v_1,v_2,v_9$.
Thereby, the following arcs are chords and deletable:
$(v_8,v_9)$, $(v_0,v_1)$, $(v_2,v_3)$, $(v_4,v_5)$.
We check that $(v_6,v_5)$ and $(v_8,v_7)$ are also deletable to finish.

For $G_9$, in the first orientation, we add the arcs $(v_0,v_5)$, $(v_7,v_1)$, $(v_2,v_9)$, $(v_6,v_3)$, $(v_4,v_8)$.
We check that the following arcs are deletable:
$(v_0,v_1)$, $(v_2,v_3)$, $(v_4,v_5)$, $(v_7,v_8)$.
In the second orientation, we create the following Hamiltonian  circuit:
$v_0,v_1,v_7,v_6,v_3,v_2,v_9,v_8,v_4,v_5$.
Thereby, the following arcs are chords and deletable:
$(v_2,v_1)$, $(v_4,v_3)$, $(v_5,v_6)$, $(v_9,v_0)$.
We check that $(v_9,v_8)$ and $(v_7,v_6)$ are also deletable to finish.

\begin{figure}[!ht]
    \centering
    \includegraphics[width=\linewidth]{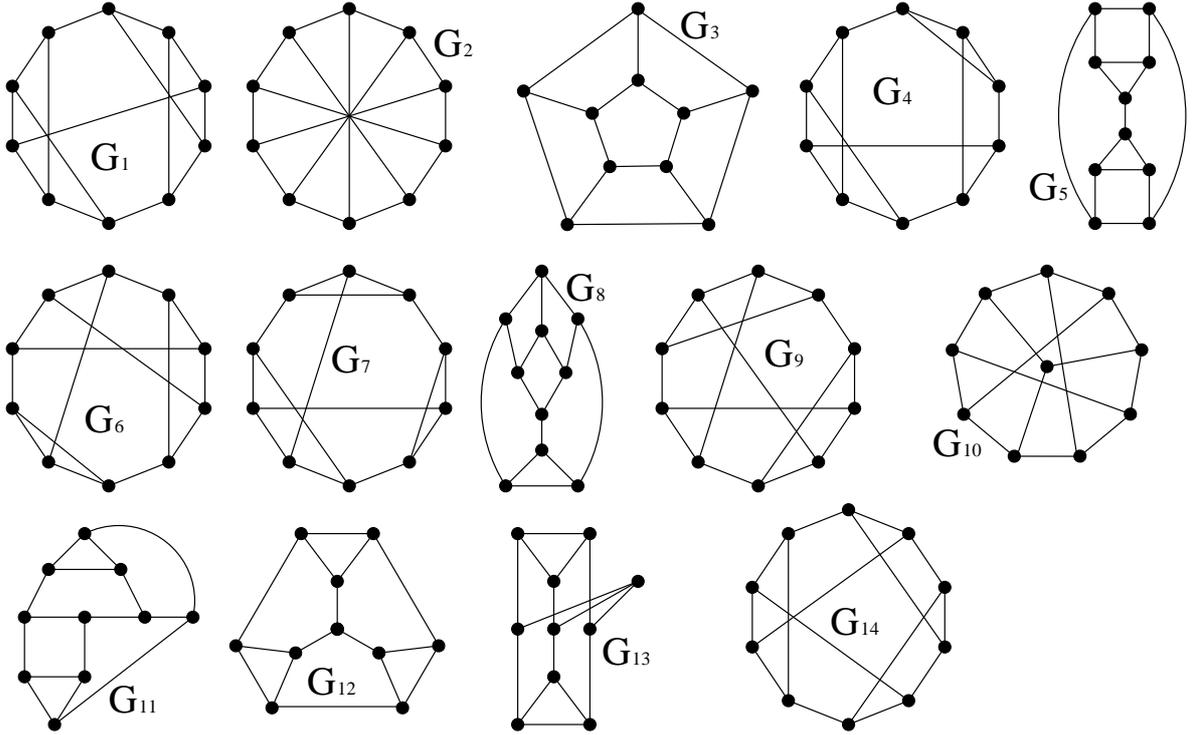}
    \caption{The $3$-edge-connected cubic graphs on 10 vertices}
    \label{n10}
\end{figure}

\section{The Petersen family} \label{sec:genpet}

Hörsch and Szigeti \cite{szi} proved the Petersen graph has Frank number 3.
We give a simple, short, hybrid proof of the fact that the Petersen graph has
Frank number larger than 2.

\begin{proposition}
The Frank number of the Petersen graph is larger than $2$.
\end{proposition}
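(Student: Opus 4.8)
The plan is to show that no two strongly connected orientations $O_1, O_2$ of the Petersen graph $P$ can together make every edge deletable, by a counting/parity argument based on the red--green colouring of vertices introduced above. Recall that in a strongly connected orientation of a cubic graph, an arc $(u,v)$ can be deletable only if $u$ is red (outdegree $2$) and $v$ is green (indegree $2$); and since the Petersen graph is $3$-edge-connected but not $4$-edge-connected-free of small cuts in the relevant sense, I will want to exploit that each vertex is incident to at most one deletable outgoing arc (if it is red) and at most one deletable incoming arc (if it is green). So in a single orientation, the number of deletable arcs is at most $\min(r, g)$ where $r$ (resp. $g$) is the number of red (resp. green) vertices, and $r + g \le 10$ with $r \equiv g \pmod{?}$ forced by the fact that the total outdegree equals the total indegree equals $|E| = 15$.

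First I would set up the parity bookkeeping: if a cubic graph on $n$ vertices has an orientation with $r$ red vertices, $g$ green vertices, and $n - r - g$ vertices of outdegree $3$ or $0$, then summing outdegrees gives $3\cdot(\text{sources}) + 2r + g + 0\cdot(\text{green count already used})$ — more carefully, writing $s$ = number of sources (outdeg $3$), $t$ = number of sinks (outdeg $0$), we get $3s + 2r + 1\cdot g = 15$ and by strong connectivity $s = t = 0$, so $2r + g = 15$, and symmetrically $2g + r = 15$, forcing $r = g = 5$. Hence in any strongly connected orientation of $P$, exactly $5$ vertices are red and $5$ are green, and the number of deletable arcs is at most $5$. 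The second step is the key structural claim: in fact the $5$ red vertices and $5$ green vertices, each red vertex contributing at most one deletable out-arc whose head must be green, the deletable arcs form (a subgraph of) a perfect matching-like structure between reds and greens — and crucially, because $P$ has no $4$-cycles and the only cuts of size $3$ are the trivial stars, I would argue each orientation has at most $4$ (or some bound strictly less than what is needed) genuinely deletable arcs, using the obstruction in Figure~\ref{fig:rg}: an out-arc from a red vertex fails to be deletable precisely when it lies in a directed $3$-edge-cut, and one analyzes how many red vertices can avoid this.

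The hard part will be pinning down exactly how many deletable arcs a single strongly connected orientation of $P$ can have and showing $2$ orientations cannot cover all $15$ edges — i.e., ruling out the possibility that two orientations with, say, $5$ and $5$ (or more) deletable arcs between them hit all $15$ edges once the red/green constraints and the specific structure (girth $5$, vertex-transitivity, the $3$-edge-cuts being exactly the stars) are imposed. I expect the cleanest route is: (i) show each orientation has at most $5$ deletable arcs, hence two orientations deletably cover at most $10 < 15$ edges if the bound were tight — but since $10 < 15$ is false-looking, I must instead show the bound per orientation is actually stronger, OR allow that deletable sets can overlap and argue a parity/covering obstruction. So the actual obstacle is establishing the right per-orientation bound; I anticipate the answer is that at most $6$ arcs are deletable but the deletable sets in any two orientations must share structure forcing a common uncovered edge, or alternatively that summing over a cleverly chosen edge-subset (e.g. a perfect matching or the edges of a fixed $5$-cycle) each orientation covers at most one or two of them, so two orientations miss at least one. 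I would finish by exhibiting such a witness set of edges of $P$ on which any strongly connected orientation has at most (size$/2 - 1$) deletable members, completing the contradiction.
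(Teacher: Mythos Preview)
Your parity computation $r=g=5$ is correct, but the central claim that each red vertex has at most one deletable outgoing arc is false, and with it the bound of at most $5$ deletable arcs per orientation collapses. A red vertex $u$ has two out-arcs $(u,v)$ and $(u,w)$; deletability of $(u,v)$ only requires a directed $(u,v)$-path through $w$, and independently deletability of $(u,w)$ only requires a directed $(u,w)$-path through $v$. Nothing prevents both from holding simultaneously. Concretely, the Petersen graph has strongly connected orientations with $8$ deletable arcs; since $8$ arcs have $16$ endpoint incidences spread over $10$ vertices, several red vertices must have both out-arcs deletable. So the inequality $2\times 5 < 15$ that you were hoping would finish the argument in one line is simply unavailable.

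You sensed this (``$10<15$ is false-looking''), but the fallback plan---finding a small witness set of edges on which every orientation is deficient---is left entirely unspecified, and I do not see how to make it work: with $8$ deletable arcs available, an orientation can cover any fixed matching or $5$-cycle quite generously. The paper's proof is not purely combinatorial. It is a hybrid argument: a computer enumeration (via \texttt{nauty}) lists all $18$ non-isomorphic strongly connected orientations; the pigeonhole bound $\lceil 15/2\rceil = 8$ together with the non-bipartiteness bound of at most $8$ red--green arcs cuts this to $8$ candidates; the further requirement that every vertex meet a deletable arc (since at most two per vertex are deletable in the partner orientation) cuts to $4$ candidates; and a short case check on the shapes of those four deletable-arc sets (a $7$-path plus an edge, or a $5$-path plus a $3$-path) shows no two of them cover all $15$ edges. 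Your proposal does not reach the computational step, and the combinatorial part you do have rests on a false lemma.
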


\begin{proof}
We can determine all non-isomorphic strongly connected orientations of the Petersen graph using nauty \cite{nauty}.
We find there are only 18 such orientations\footnote{See the Appendix}.
In any such orientation, the in- and out-degree of every vertex is at least 1.
If we suppose the Frank number is 2, then we should find two orientations such that
every edge is deletable in at least one of the orientations.
Since there are 15 edges, one of the orientations must contain at least 8 deletable arcs.
We know a deletable arc in a cubic graph connects a red vertex to a green.
There are necessarily 5 red and 5 green vertices in any strongly connected orientation of the Petersen graph.
Since the Petersen graph is non-bipartite, there can be at most 8 deletable arcs in any orientation.
Therefore, we collect the oriented Petersen graphs with 7 or 8 deletable arcs out of the 18 possibilities.
We find that 8 of them satisfy this condition.
We also know there are at most two deletable arcs incident to any vertex.
Therefore, in the first orientation, every vertex must be incident to at least 1 deletable arc.
We find only 4 orientations having this property out of the 8.
These are G(12), G(15), G(17), G(18) in the Appendix. Each of the four graphs has 8 deletable arcs.
It remains to see whether we can combine two sets to cover all edges at least once.
We find the deletable arcs of G(15) and G(17) form a path with 7 edges plus an independent edge, while in G(12) and G(18) we find a 5-path and a 3-path.
This shows we cannot combine two different types or two of the second kind.
It is left to check whether the pairings G(15)-G(15), G(15)-G(17) or G(17)-G(17) works.
However, G(15) and G(17) differs only by the direction of a single arc, which is deletable in both cases.
It remains to check if we can map the vertices of the non-deletable arcs of G(15) to the deletable arcs of the 7-path of G(15) or G(17).
Since this is not the case, we conclude the Frank number of the Petersen graph is larger than 2.
\end{proof}

\begin{figure}[!ht]
    \centering
    \includegraphics[width=0.3\linewidth]{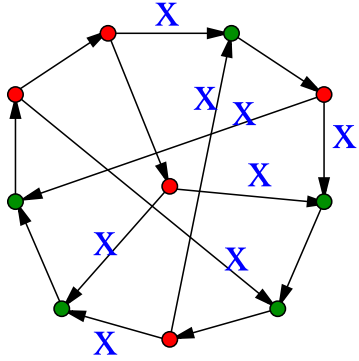}
    \caption{The oriented Petersen graph G(15) with 8 deletable arcs}
    \label{g(15)}
\end{figure}

We investigated the most natural generalized Petersen graphs in the hope of finding another example of a 3-edge-connected graph with Frank number at least 3. As it turned out, the generalized Petersen graph $G(2s+1,s)$ admits two appropriate orientations, consequently its Frank number is 2.

\begin{theorem} \label{t:genpet}
If $GP(2s+1,s)$ denotes the generalized Petersen graph for $s\ge 3$, then $F(GP(2s+1,s))=2$.
\end{theorem}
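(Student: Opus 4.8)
The plan is to prove the two inequalities $F(GP(2s+1,s))\ge 2$ and $F(GP(2s+1,s))\le 2$ separately. Write $n=2s+1$ and let $u_0,\dots,u_{n-1}$ be the outer cycle, $w_0,\dots,w_{n-1}$ the inner vertices, with spokes $u_iw_i$ and inner edges $w_iw_{i+s}$ (indices mod $n$). Since $\gcd(s,n)=1$, the inner edges form a single $n$-cycle, so $GP(2s+1,s)$ has the same skeleton as a prism or a M\"obius ladder -- two vertex-disjoint $n$-cycles joined by a perfect matching -- except that now the number of rungs $n$ is odd. The graph is cubic and $3$-edge-connected, hence in particular not $4$-edge-connected; by the Nash--Williams theorem (Theorem~\ref{t:NW}) it then admits no $2$-arc-connected orientation, and a single orientation cannot make every edge deletable. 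Hence $F(GP(2s+1,s))\ge 2$.

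For the upper bound I would exhibit two strongly connected orientations $O_1,O_2$ so that every edge is deletable in at least one of them, always checking deletability through Proposition~\ref{lem:str_con_check} (produce a directed $(u,v)$-path in $O_i-(u,v)$) together with the standing observation that every chord of a circuit of a strongly connected orientation is deletable; Fact~\ref{reverse} lets me fix the direction of any single spoke I like. The natural first attempt, mimicking the prism and M\"obius ladder proofs of Section~\ref{sec:pre}, is to orient both $n$-cycles as directed circuits and the spokes in a near-alternating pattern, with $O_2$ obtained by reversing the spokes. I would start from this template, identify exactly which edges it leaves uncovered, and then repair it.

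The key difficulty -- and the reason $s\ge 3$ matters while $s=2$ (the Petersen graph) is the exception -- is that the pure ``two circuits plus spokes'' template provably cannot finish the job. If the outer cycle is a directed circuit, then the outer edge $u_iu_{i+1}$ is deletable only when the spoke at $u_i$ leaves the outer cycle and the spoke at $u_{i+1}$ enters it, so the number of deletable outer edges is at most the number of such spoke transitions around the cycle, which is at most $(n-1)/2=s$; two orientations of this form thus cover at most $2s<n$ outer edges. Likewise, in any orientation the odd inner $n$-cycle must contain a vertex that is neither a source nor a sink on it, and such a vertex is incident to at most one deletable inner edge, so every single orientation leaves an inner edge uncovered; the two orientations must therefore genuinely differ along the inner cycle. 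Relatedly, one cannot simply fall back on a Hamiltonian circuit: through the isomorphism $GP(2s+1,s)\cong GP(2s+1,2)$ this graph is non-Hamiltonian exactly when $n\equiv 5\pmod 6$, i.e. $s\equiv 2\pmod 3$.

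Consequently the heart of the argument is to describe, for every $s\ge 3$, at least one orientation whose underlying circuit weaves between the outer and inner cycles -- long enough that the remaining outer edges, spokes and inner edges all appear as chords of it -- together with a second orientation (a reflected or rotated variant, or one built from a second weaving circuit, or a more delicate direct spoke assignment) that picks up whatever the first one misses, and then to verify that the two together cover all $3n$ edges. I expect this verification to require splitting into a few residue classes of $s$ (most naturally $s \bmod 3$, to separate the Hamiltonian and non-Hamiltonian cases, and perhaps $s \bmod 6$), with the non-Hamiltonian case $s\equiv 2\pmod 3$ -- where one must use a circuit missing one or two vertices and handle those few incident edges by hand -- being the main obstacle; the hypothesis $s\ge 3$ is precisely what gives enough room in the weaving circuit for this to go through.
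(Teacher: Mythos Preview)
Your lower bound is fine and matches the paper exactly. The upper bound, however, is a plan rather than a proof: you never actually write down the two orientations $O_1,O_2$, and the entire content of the theorem lies precisely in exhibiting them and checking coverage. Saying that one should ``describe \dots\ an orientation whose underlying circuit weaves between the outer and inner cycles'' and that you ``expect'' a case split mod $3$ or mod $6$ is not a substitute for the construction. As written, the heart of the argument is missing.

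For comparison, the paper's construction is considerably simpler than what you anticipate. It does \emph{not} split on Hamiltonicity at all; instead it uses, uniformly for every $s\ge 3$, the cycle of length $4s+1$ that traverses the entire outer cycle $u_1,\dots,u_{2s+1}$, drops down the spoke $u_{2s+1}v_{2s+1}$, traverses the entire inner $(2s{+}1)$-cycle except the vertex $v_{s+1}$, and returns via the spoke $v_1u_1$. In $O_1$ this cycle is oriented as a circuit, so all but three edges (those incident to the missing vertex $v_{s+1}$) are chords and hence deletable; the spokes $u_iv_i$ are oriented alternately so that roughly every other outer and inner arc on the circuit is also deletable. The second orientation $O_2$ is obtained from $O_1$ by keeping all spoke directions, reversing the entire outer circuit except for one edge, and fixing the inner directions accordingly; one then checks with Proposition~\ref{lem:str_con_check} that the arcs missed by $O_1$ are now deletable. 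The only case distinction needed is the parity of $s$ (it governs which way the spoke at $v_{s+1}$ points), not $s\bmod 3$. So your instinct that a long circuit is the key is correct, but your proposed case analysis based on Hamiltonicity is a detour, and the actual orientations and their verification still have to be supplied.
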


\begin{proof}
By Theorem~\ref{t:NW}, the graph $GP(2s+1,s)$ does not admit a 2-arc-connected orientation since it is not 4-edge-connected. Thus $F(GP(2s+1,s))>1$.
Note that $GP(2s{+}1,s)$ is not Hamiltonian.
However, it contains a cycle of length $n-1$, where $n=4s+2$ is the number of vertices of $GP(2s+1,s)$. In Figure \ref{fig:genpet} and \ref{fig:genpet2}, we show two orientations of $GP(2s+1,s)$ for $s$ even and odd respectively.
They are not only witnessing that $F(GP(2s+1,s))=2$, but $O_1$ admits the long cycle as a circuit, namely $u_1,u_2,\dots,u_{2s+1},v_{2s+1},v_s,v_{2s},v_{s-1},v_{2s-1},\dots,v_2,v_{s+2},v_1,u_1$. The constructions are very similar depending on the parity of $s$. Orientation $O_1$ is basically the same and $O_2$ should be rotated in opposite directions for the two cases.
The proofs are also very similar.
Therefore, we only give the detailed argument for the even case.
Thus suppose from now on that $s$ is even.

\begin{figure}[!h]
    \centering
    \includegraphics[width=0.9\textwidth]{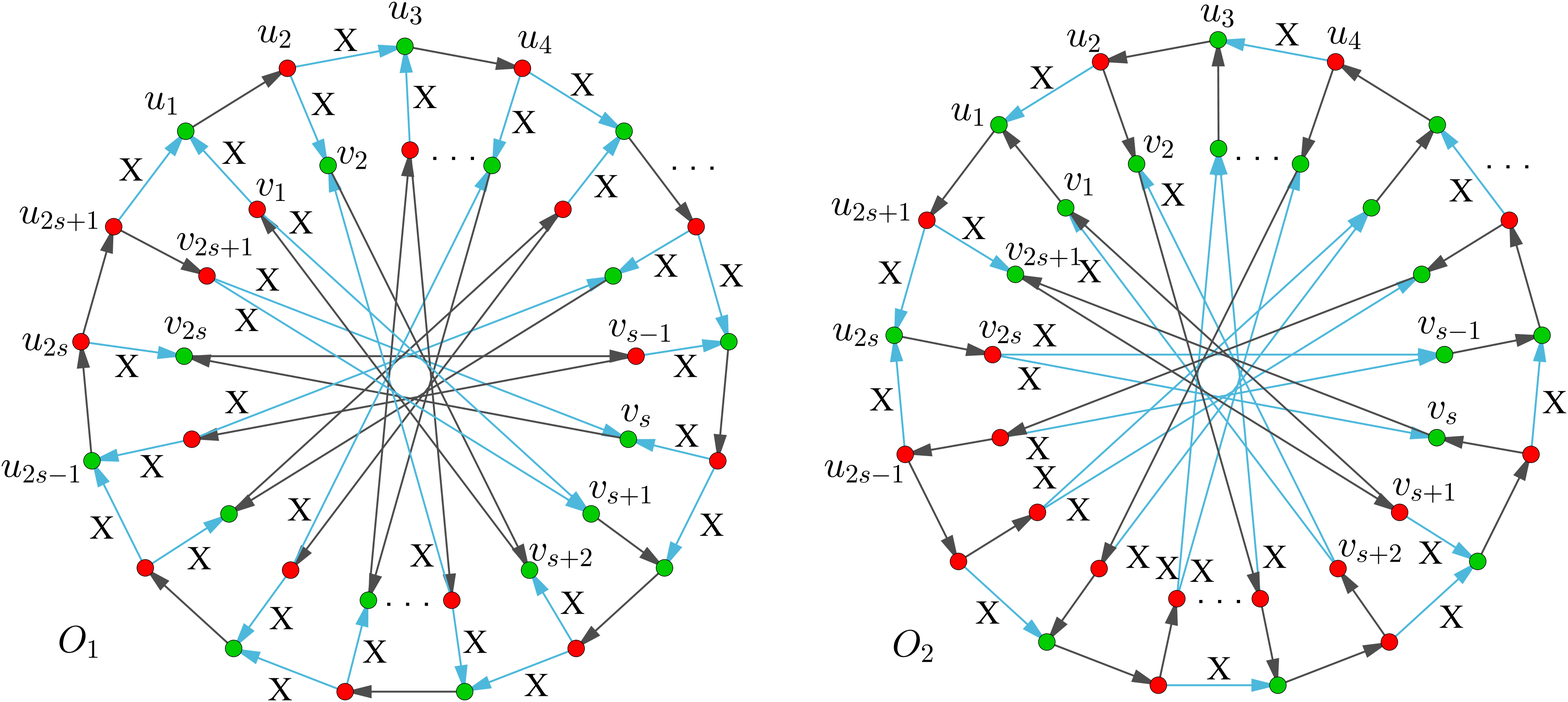}
    \caption{$F(G(2s+1,s))=2$ for $s\ge 3$, $s$ even (illustrated for $s=8$)}
    \label{fig:genpet}
\end{figure}

\begin{figure}[!h]
    \centering
    \includegraphics[width=0.9\textwidth]{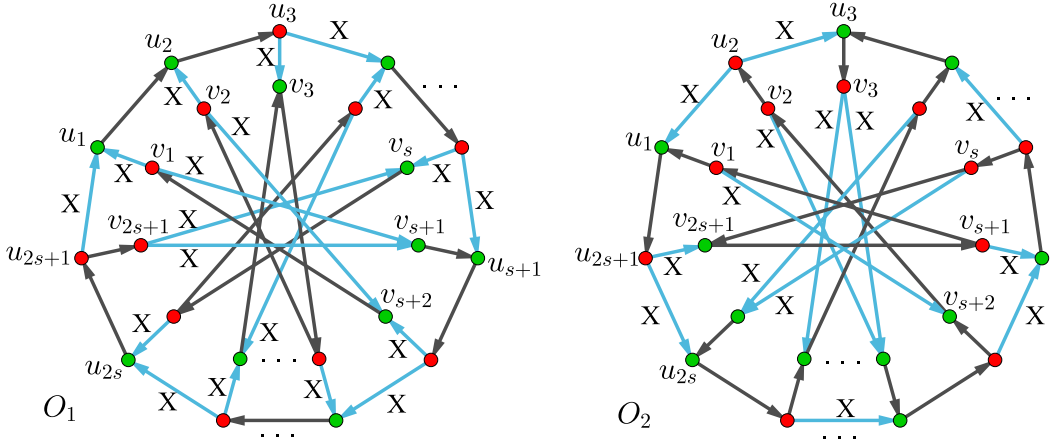}
    \caption{$F(G(2s+1,s))=2$ for $s\ge 3$, $s$ odd (illustrated for $s=5$)}
    \label{fig:genpet2}
\end{figure}

The chords of a circuit are always deletable regardless of their orientation, so we choose them in the following way: for $i\in\{2,\dots,2s\}$ the edge $u_i v_i$ is directed towards $u_i$ if and only if $i$ is odd. Both edges incident to $v_{s+1}$ are directed towards $v_{s+1}$, and the remaining edge $(u_{2s+1},u_1)$ completes the outer cycle to a circuit. Let us remark here that the in-degree of $v_{s+1}$ is just 2, since $s$ is even. This is the obstacle, which we need to solve differently for odd $s$.

We can derive $O_2$ from $O_1$ by keeping the orientation on the edges of type $u_i v_i$, and reversing the orientation of every arc except one $(u_{2s-1},u_{2s})$ in the outer cycle. At this point, the orientation of the edges of the inner cycle is determined using the information which arcs are deletable in $O_1$ (the deletable arcs are blue and marked with X).

First we have to show that these are strong orientations. $O_1$ contains a circuit containing all but one vertex and the cut, formed by the arcs incident to the missing vertex $v_{s+1}$, is not a directed cut. Hence $O_1$ is a strong orientation. $O_2$ has a long directed path $u_{2s-1},u_{2s-2},\dots,u_1,u_{2s+1},u_{2s}$, and there is a circuit $u_{2s},v_{2s},v_{s-1},u_{s-1},u_{s-2},v_{s-2},v_{2s-1},u_{2s-1}$, therefore all vertices $u_i$ belong to the same strongly connected component. One can rotate the following circuit $u_{2s+1},v_{2s+1},v_{s+1},v_1,u_1$ clockwise by 2 positions confirming that $O_2$ is also a strong orientation.

In $O_1$ the arcs between $u_i$ and $v_i$ are deletable (except for $i=s+1$) regardless of their orientation since they are chords of the long circuit. By Proposition \ref{lem:str_con_check}, the blue arcs on the outer part are deletable since there is a directed path between its endvertices using the inner circuit. Similarly, the inner blue arcs are deletable since there is a directed path using the outer circuit. For $O_2$, one can also find the directed paths switching between the use of the inner and outer circuits. Since every arc is deletable in at least one of these orientations, we proved $F(GP(2s+1,s))=2$.
\end{proof}

\section*{Discussion}

We pose the following conjectures, each of which is relaxing the strong conjecture that every $3$-edge-connected graph has Frank number at most 3.

\begin{conjecture}
For every cubic $3$-edge-connected graph $G$, there exists a strongly connected orientation $D$ of $G$ such that for every vertex $v$, there exists an arc $a_v$ incident to $v$ such that $D-a_v$ is strongly connected.
\end{conjecture}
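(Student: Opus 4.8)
We sketch a possible line of attack. First, reformulate. In \emph{any} strongly connected orientation $D$ of a cubic graph each vertex has both in-degree and out-degree at least $1$, hence exactly one of the two equals $2$; so every vertex is red or green in the sense of Section~\ref{sec:pre}, and a deletable arc always runs from a red vertex to a green one. Thus the conjecture is equivalent to: \emph{every cubic $3$-edge-connected graph has a strongly connected orientation whose deletable arcs form an edge cover} (no red vertex has both out-arcs non-deletable and no green vertex both in-arcs non-deletable). This settles the Hamiltonian case at once: orient a Hamiltonian cycle as a circuit and the complementary perfect matching $M$ arbitrarily; every arc of $M$ is a chord of that circuit, hence deletable by the chord-of-a-circuit observation in Section~\ref{sec:pre}, and $M$ meets every vertex. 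So it remains to treat non-Hamiltonian cubic $3$-edge-connected graphs, in particular the Petersen graph and all non-Hamiltonian snarks.

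For these I would use a $2$-factor. By Petersen's theorem ($G$ is bridgeless and cubic) there is a perfect matching $M$, and $F:=E(G)\setminus M$ is a disjoint union of cycles $C_1,\dots,C_t$ with $t\ge 2$. Orient each $C_i$ as a circuit and let $Q$ be the multigraph obtained by contracting each $C_i$; since contracting connected subgraphs does not lower edge-connectivity, $Q$ is $3$-edge-connected. Orient the edges of $M$ by a strongly connected orientation $\vec Q$ of $Q$ (a loop of $Q$, i.e.\ an $M$-chord inside some $C_i$, gets an arbitrary direction). The resulting orientation $D$ of $G$ is strongly connected, and a matching edge $e=uv$ with $u\in C_i$, $v\in C_j$, $i\ne j$, is deletable whenever $C_j$ is still reachable from $C_i$ in $\vec Q-e$: run around $C_i$ from $u$, cross matching arcs accordingly, and finish along $C_j$ to $v$, avoiding $e$, then apply Proposition~\ref{lem:str_con_check}; $M$-chords are always deletable. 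In particular, if $M$ can be chosen so that $Q$ is $4$-edge-connected (for which it is necessary that every $C_i$ send at least $4$ edges of $M$ to other cycles), then by Theorem~\ref{t:NW} we may take $\vec Q$ to be $2$-arc-connected, so no arc of $\vec Q$ is a ``cut arc'', \emph{all} matching edges are deletable, and $M$ already covers every vertex. This disposes of a large class of graphs; e.g.\ for the Petersen graph every $2$-factor is $5+5$, so $Q$ is automatically $5$-edge-connected.

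When no perfect matching makes $Q$ $4$-edge-connected, some matching edge $e=uv$ is forced to be a \emph{cut arc} of $\vec Q$ (its removal destroys all directed paths from the cycle of its tail to the cycle of its head), and then $e$ need not be deletable; the problem is reduced to guaranteeing, for every vertex $v$, that either the matching edge at $v$ is not a cut arc of $\vec Q$, or one of the two cycle-arcs incident to $v$ is deletable. A cut arc is always witnessed by a $3$-edge-cut of $Q$ all but one of whose edges point the same way, so the analysis is local to these $3$-edge-cuts; I would try to combine a good choice of $M$ (every edge of a bridgeless cubic graph lies in some perfect matching, giving considerable freedom) with a local re-orientation along each bad $3$-edge-cut, or exhibit deletable cycle-arcs at the two endpoints of a bad matching edge directly from the structure of the $3$-edge-cuts of $G$, for instance via their canonical cactus representation.

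The main obstacle I foresee is exactly this last part: turning the unavoidable failure of some matching edges into a covering guarantee from cycle edges, \emph{uniformly} over all cubic $3$-edge-connected graphs. For $3$-edge-colorable $G$ the three color classes yield a $2$-factor of even cycles and one can attempt to refine the Hörsch--Szigeti construction, but no such colouring is available for snarks, which is where I expect the real difficulty to lie. A complementary route worth pursuing in parallel is to show that the property in the conjecture is preserved under the local cubic modifications of Section~\ref{sec:lcm} and under contracting triangles, thereby reducing, as in Theorem~\ref{triangle-free}, to cubic triangle-free graphs or to a manageable list of irreducible cases that could then be handled directly or by computer.
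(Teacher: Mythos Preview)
This statement appears in the paper as an \emph{open conjecture} in the Discussion section; the paper offers no proof and does not claim one. There is therefore nothing to compare your attempt against --- any complete argument would be genuinely new.

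Your reformulation (deletable arcs must form an edge cover) and the Hamiltonian case are correct and clean. The $2$-factor route is also sound as far as it goes: when the contracted multigraph $Q$ is $4$-edge-connected, Nash--Williams yields a $2$-arc-connected orientation of $Q$, every matching arc becomes deletable in the lifted orientation $D$, and the perfect matching already covers $V(G)$. Your Petersen check (every $2$-factor is $5+5$, so $Q$ is a $5$-fold dipole) is right and does settle that instance of the conjecture.

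The genuine gap is precisely the one you identify yourself. When no perfect matching makes $Q$ $4$-edge-connected, some matching arc can be a cut arc of $\vec Q$ and hence non-deletable in $D$, and you then need a deletable \emph{cycle} arc at each endpoint of such a bad matching edge. You give no mechanism that guarantees this; the cactus-of-$3$-cuts sketch is only a heuristic, and the proposed reduction via local cubic modification would require showing that the property ``some strongly connected orientation has its deletable arcs covering every vertex'' is preserved under truncation and triangle contraction --- this does not follow from Lemmas~\ref{lem:lcm} and~\ref{trunc}, which concern the Frank number, not the existence of a single orientation with a deletable edge cover. What you have is a reasonable programme that disposes of substantial subclasses (Hamiltonian graphs, the Petersen graph, and more generally any graph admitting a $2$-factor whose contraction is $4$-edge-connected), but not a proof of the conjecture.
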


\begin{conjecture}
For every cubic $3$-edge-connected graph $G$, there exist two strongly connected orientations $D_1$ and $D_2$ of $G$ such that for every vertex $v$, there exist two arcs $a^1_v$ and $a^2_v$ incident to $v$ such that $D_1-a^1_v$ and $D_2-a^2_v$ are strongly connected.
\end{conjecture}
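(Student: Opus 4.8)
The plan is to deduce the conjecture from the stronger one--orientation statement that for every cubic $3$-edge-connected graph $G$ there is a single strongly connected orientation $D$ in which every vertex is incident to a deletable arc. Given such a $D$ one takes $D_1=D_2=D$; (if one also insists that the two arcs $a_v^1,a_v^2$ be distinct \emph{edges} of $G$, the construction below is flexible enough to produce two orientations at a small extra cost, since in it at most one vertex at a time forces a particular incident arc). So I would concentrate on producing one good orientation. The Hamiltonian case is immediate: orient a Hamiltonian cycle as a circuit, so that the remaining perfect matching consists of chords of the circuit, all deletable, and every vertex is incident to its matching arc. Assume from now on that $G$ is not Hamiltonian.

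For the general case I would use the decomposition $G=F\cup M$, where by Petersen's theorem the bridgeless cubic graph $G$ has a $2$-factor $F$ with cycles $C_1,\dots,C_r$ ($r\ge 2$, each cycle of length at least $3$) and a complementary perfect matching $M=E(G)\setminus E(F)$. Orient every $C_i$ as a circuit. Contracting each $C_i$ to a single vertex produces a multigraph $H$ on $r$ vertices with edge set $M$; since an edge cut of $H$ is an edge cut of $G$ made up entirely of $M$-edges, $H$ is $3$-edge-connected, hence $2$-edge-connected, so by Robbins' theorem $H$ has a strongly connected orientation, which we pull back to the $M$-edges to obtain an orientation $D$ of $G$. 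Then $D$ is strongly connected: inside each $C_i$ one may traverse the whole circuit, and any two vertices of $D$ are joined both ways by directed walks obtained by lifting directed walks of $H$ (going around the intermediate circuits). The key observation is that a matching arc $(v,u)$ with $v\in C_i$ and $u\in C_j$ (possibly $i=j$) is deletable in $D$ if and only if $H$, with that single edge removed, still has a directed path from the vertex representing $C_i$ to the one representing $C_j$: one direction lifts such a path to a $(v,u)$-path of $D-(v,u)$ and applies Proposition~\ref{lem:str_con_check}, the other direction projects a witnessing path back to $H$. In particular every matching arc that is a chord of its own cycle is deletable, so every vertex whose matching edge is such a chord — and, more generally, every vertex incident to a non-critical matching arc, that is one whose removal from $H$ keeps the relevant reachability — is already incident to a deletable arc.

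The main obstacle is the remaining vertices. A vertex $v\in C_i$ is red (out-degree $2$) exactly when its matching arc leaves $v$, and green otherwise, and the colour condition noted earlier forces the only candidates at a red $v$ to be its out-matching arc (which automatically has a green head) and its successor arc along $C_i$ in the case where that successor happens to be green, with the symmetric statement at a green vertex. So the uncovered vertices are exactly the heads, respectively the tails, of \emph{critical} matching arcs of $H$ — arcs that, in the chosen orientation of $H$, are the unique arc crossing some directed cut — whose relevant cycle neighbour has the wrong colour. Critical arcs are unavoidable, because a $3$-edge cut of $H$ is necessarily oriented with two arcs one way and one the other; the real task is to arrange that each offending vertex is rescued by a cycle arc instead. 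I would attempt this by choosing the strong orientation of $H$ from an ear decomposition, so that critical arcs sit only on one spanning circuit of $H$, and by performing, for any vertex that still escapes, a local reversal of a short directed path through it — switching it between red and green, or rerouting a single matching arc — while repairing strong connectivity locally. The hard part, and the reason the statement remains a conjecture, is executing all such local repairs simultaneously without one repair un-covering a vertex handled by another; I expect the right framework is an induction on cyclic edge-connectivity, reducing by Theorem~\ref{triangle-free} to the cyclically $4$-edge-connected triangle-free case and then using splitting-off, but supplying that reduction is exactly what the argument still lacks.
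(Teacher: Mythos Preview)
This statement appears in the paper as an open \emph{conjecture}; the paper provides no proof, listing it in the Discussion section among several relaxations of the Frank-number-$3$ conjecture. There is therefore no paper proof to compare your attempt against.

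Your proposal does not constitute a proof either, and to your credit you say so explicitly. The reduction to the preceding one-orientation conjecture (take $D_1=D_2=D$ and $a_v^1=a_v^2=a_v$) is valid if the two arcs are not required to be distinct, but that conjecture is equally open in the paper. Your construction for the one-orientation statement --- orient the cycles of a $2$-factor as circuits, contract them to a multigraph $H$, and orient $H$ strongly by Robbins --- is a natural first move, and your claim that a matching arc is deletable in $D$ iff its image in $H$ is non-critical is correct. The gap you identify is genuine: endpoints of critical matching arcs of $H$ may be left with no deletable incident arc, and you give no argument that the suggested local repairs (short-path reversals, ear-decomposition control of where critical arcs sit) can be carried out simultaneously without one repair undoing another. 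Finally, the appeal to Theorem~\ref{triangle-free} is misdirected as stated: that theorem produces, from an arbitrary $G$ with $F(G)\ge 3$, a triangle-free $H^\ast$ with $F(H^\ast)\ge F(G)$, which is a statement about \emph{lower bounds} on Frank number. What a reduction here would need is the opposite direction --- that establishing the vertex-covering property for triangle-free (or cyclically $4$-edge-connected) graphs suffices for all cubic $3$-edge-connected graphs --- and that would require adapting Lemmas~\ref{lem:lcm} and~\ref{trunc} to this weaker property, which you have not done.

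In short: the proposal is an honest sketch of a plausible line of attack together with an accurate diagnosis of where it stalls, but it is not a proof, and the paper contains none.
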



\begin{conjecture}
For every cubic $3$-edge-connected graph $G$, there exists a strongly connected orientation $D$ of $G$ such that for at least half of the arcs $D-a$ is strongly connected.
\end{conjecture}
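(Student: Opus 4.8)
The first move is a cheap reduction that disposes of every graph we currently understand. If $F(G)\le 2$, pick two strongly connected orientations $O_1,O_2$ witnessing this; their sets $D_1,D_2$ of deletable edges satisfy $D_1\cup D_2=E(G)$, so $\max(|D_1|,|D_2|)\ge |E(G)|/2$, and the corresponding $O_i$ is the desired $D$. Since a cubic graph is never $4$-edge-connected, Theorem~\ref{t:NW} rules out $F(G)=1$, so we may assume $F(G)\ge 3$ (if the H\"orsch--Szigeti conjecture holds, $F(G)=3$). So the whole content of the conjecture lies in graphs of Frank number at least $3$.

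\textbf{Reducing to triangle-free graphs.} Next I would show that the property ``some strongly connected orientation has at least half of its arcs deletable'' is preserved when we blow a vertex up into a triangle (a local cubic modification of degree $3$), and dually when we contract a triangle. Concretely, extending a witnessing orientation of $H/T$ to $H$ by one of the four triangle orientations of Figure~\ref{fig:truncation} keeps every previously deletable arc not incident to the contracted vertex deletable (Proposition~\ref{lem:str_con_check}), while a short case analysis exactly as in the proof of Lemma~\ref{trunc} produces enough deletable arcs among the three new triangle edges and the three reattached edges that the number of deletable arcs does not drop below half of $|E(H)|=|E(H/T)|+3$. Iterating the contraction, it suffices to prove the conjecture for triangle-free cubic graphs with $F\ge 3$ — the very reduction of Theorem~\ref{triangle-free}. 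Combined with the direct observation that the oriented Petersen graph $G(15)$ of the Appendix has $8>15/2$ deletable arcs, this already settles the conjecture for the Petersen graph and hence, via the blow-up step, for every graph obtained from it by successive truncations, i.e.\ for every graph presently known to have Frank number $3$.

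\textbf{The general triangle-free case.} Here the plan is to build $D$ from a $2$-factor. By Petersen's classical theorem \cite{b&m} a $3$-edge-connected cubic graph $G$ has a perfect matching $M$, and $G-M$ is a disjoint union of cycles $C_1,\dots,C_k$. Orient each $C_i$ as a circuit; contracting the $C_i$ yields a $2$-edge-connected multigraph, which by Robbins' theorem has a strong orientation, and pulling this orientation back (choosing the circuit directions and the directions of the inter-cycle matching edges accordingly, and orienting intra-cycle matching edges arbitrarily) gives a strongly connected $D$. Every matching edge with both ends on the same $C_i$ is then a chord of a circuit, hence deletable; the remaining task is to choose $M$, the $2$-factor, and the circuit directions so that the alternating argument used for wheels, prisms and M\"obius ladders forces roughly half of the circuit arcs to be deletable as well, carrying the total past $|E(G)|/2$.

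\textbf{The main obstacle.} That last step is where the difficulty concentrates. Unless $G$ is bipartite, no single perfect matching makes the red/green pattern alternate along every circuit of the $2$-factor, so the alternating argument works only up to a bounded number of ``bad'' spots per cycle, which have to be patched — as in the odd-wheel and even-M\"obius arguments — without destroying strong connectivity; and when every available $2$-factor consists of many short cycles with almost no chords (forced already for the Petersen graph), essentially all deletable arcs must be circuit arcs or inter-cycle matching edges, whose deletability depends delicately on how the $3$-edge-cuts of $G$ interleave the cycles. A uniform treatment seems to require either a genuine structural handle on triangle-free cubic graphs of Frank number at least $3$ beyond the Petersen graph, or a new averaging idea — note that even a proof of $F(G)\le 3$ would only give, by pigeonhole, an orientation with $\ge|E(G)|/3$ deletable arcs, short of the $|E(G)|/2$ demanded here. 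Given this, I would also stress-test the statement on hard snark families before committing to a proof.
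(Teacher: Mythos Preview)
The statement you are addressing is posed in the paper as an open \emph{conjecture}; the paper gives no proof of it, so there is nothing to compare your proposal against. Your write-up is also not a proof, and to your credit you say so: you correctly observe that the case $F(G)\le 2$ is immediate by pigeonhole, you verify the conjecture for the Petersen graph via $G(15)$ (eight deletable arcs out of fifteen), and you then identify the general triangle-free $F\ge 3$ case as the real obstruction and leave it open. That is an honest status report consistent with the paper's own stance, not a resolution of the conjecture.

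One genuine gap in the part you \emph{do} claim to carry out: the triangle blow-up step does not work as smoothly as you suggest. You assert that extending a witnessing orientation of $H/T$ over the new triangle, ``exactly as in the proof of Lemma~\ref{trunc}'', always yields at least $|E(H)|/2=(|E(H/T)|+3)/2$ deletable arcs. But Lemma~\ref{trunc} is a multi-orientation statement, and in any \emph{single} extension only one or two of the three triangle arcs are deletable, while one of the reattached arcs may simultaneously lose deletability (its tail changes colour). Concretely, if $v$ has in-arc $(a,v)$ and out-arcs $(v,b),(v,c)$ with \emph{both} out-arcs deletable in your orientation of $H/T$, then every admissible orientation of the triangle yields a net gain of exactly one deletable arc, not the two you need when $|E(H/T)|$ is even. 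So the inductive step from Petersen to \emph{all} of its successive truncations does not go through without an extra argument (e.g.\ showing one can always start from an orientation of $H/T$ in which the contracted vertex has at most one deletable out-arc, or maintaining a strict surplus above $|E|/2$). This does not affect your overall conclusion---the conjecture remains open either way---but the claimed reduction to triangle-free graphs is not yet justified.
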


\begin{conjecture}
If a $3$-edge-connected cubic graph $G$ admits a Hamiltonian cycle, then $G$ has Frank number $2$.
\end{conjecture}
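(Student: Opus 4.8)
\emph{Proof idea.} The lower bound is free: a cubic graph has minimum degree $3<4$, so by Theorem~\ref{t:NW} it admits no $2$-arc-connected orientation and hence $F(G)\ge 2$. The whole content is therefore to produce two suitable orientations, and the plan is to base the first one on a Hamiltonian cycle $C=v_0v_1\cdots v_{n-1}$ of $G$ (note that $n$ is even). Orient $C$ as a circuit $O_1$. By the observation used throughout Section~\ref{sec:pre}, every chord of a circuit is deletable regardless of its direction, so the $n/2$ edges of the perfect matching $M=E(G)\setminus C$ are already deletable in $O_1$, no matter how we orient them. Thus everything reduces to covering the $n$ edges of $C$: we must choose the orientation of the chords (and a second orientation) so that each edge of $C$ is deletable in one of the two orientations.

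I would first clear away the easy case. If $G$ has a second Hamiltonian cycle $C'$ containing all of $M$ (equivalently $|C\cap C'|=n/2$, so $C\cup C'=E(G)$), then orienting $C$ and $C'$ as circuits for $O_1$ and $O_2$ makes every edge outside $C$ deletable in $O_1$ and every edge outside $C'$ deletable in $O_2$, so $F(G)=2$; this already covers $K_4$ and several further small graphs. In general one keeps $C$ for $O_1$ and works out exactly when a circuit arc $(v_i,v_{i+1})$ is deletable: after its deletion $v_i$ must still have an out-arc and $v_{i+1}$ an in-arc, which forces the chord at $v_i$ to leave $v_i$ and the chord at $v_{i+1}$ to enter $v_{i+1}$; additionally a directed bypass from the other end of the first chord to $v_{i+1}$ must exist. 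Labelling a chord $+$ or $-$ at each of its endpoints according as it leaves or enters there, this necessary condition forces the labels at $v_i$ and $v_{i+1}$ to differ, so a pair of circuit orientations on $C$ can succeed only when the labels alternate around $C$, i.e.\ only when every chord of $C$ joins two vertices of opposite colour in the proper $2$-colouring of the cycle. In that ``cycle-bipartite'' case I would use the alternating chord orientation in $O_1$, verify that the required bypasses exist (following chords forward along $C$, as in the Möbius-ladder and prism arguments), and take $O_2$ with all chords reversed.

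The hard part is the remaining case, where $C$ has a chord joining two vertices of the same colour: then, by the label argument, no pair of circuit orientations on $C$ succeeds and one must build $O_2$ differently---exactly what happens for the odd prisms and the even Möbius ladders in Section~\ref{sec:pre}, where $O_2$ is a different, possibly non-Hamiltonian, union of circuits. I would try local surgery at such a ``bad'' chord $e=v_av_b$: cut $C$ into the two arcs bounded by $v_a$ and $v_b$ and reassemble, using $e$ and perhaps some further chords, into a new circuit (or a vertex-covering system of circuits) in which the arcs of $C$ left uncovered by $O_1$ become chords or acquire short bypasses, thereby reducing the number of bad chords and setting up an induction on that number (or on $|V(G)|$). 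Showing that such a reassembly always exists for a $3$-edge-connected cubic graph without ever destroying strong connectivity is the real obstacle; a promising way to organise it is through the crossing/nesting structure of the chord diagram of $C$ (its circle graph)---reversing a maximal nested family of chords and rerouting along the crossing ones---and it may also help to invoke Smith's theorem, that every edge of a cubic graph lies in an even number of Hamiltonian cycles, to guarantee a second Hamiltonian cycle meeting $C$ in a convenient way. Since the statement is only conjectured, I expect this step to be genuinely difficult and perhaps to require extra structural assumptions.
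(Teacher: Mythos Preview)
The paper gives no proof of this statement: it is listed as a \emph{conjecture} in the Discussion section, with the single remark ``This is true up to 12 vertices.'' So there is nothing to compare your argument against; the authors have only computational evidence, not a proof.

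Your proposal is an honest sketch of a strategy rather than a proof, and you flag the gap yourself. A couple of comments on the outline. The reduction to covering the edges of $C$ is correct, and the ``two-Hamiltonian-cycles with $C\cup C'=E(G)$'' case is fine. But your ``cycle-bipartite'' case is not yet settled either: having the chord at $v_i$ leave and the chord at $v_{i+1}$ enter is only the necessary red--green condition, which the paper explicitly notes is not sufficient (Figure~\ref{fig:rg}). You still have to exhibit the bypass from $v_i$ to $v_{i+1}$ in $O_1-e$ for an \emph{arbitrary} bipartite chord diagram, not just for the very structured ones arising in M\"obius ladders and prisms; this already seems nontrivial when chords nest deeply. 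The genuinely hard ``non-bipartite'' case is, as you say, where the real obstruction lies, and your proposed induction via surgery on a bad chord or via Smith's theorem is plausible but speculative. In short, your write-up is a reasonable programme, matching the spirit of the constructions in Section~\ref{sec:pre}, but it is not a proof --- which is consistent with the paper's status of the statement as open.
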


This is true up to 12 vertices.

\noindent Permutation snarks on $2k$ vertices are similar to $k$-prisms.
There are two $k$-cycles $v_1,v_2,\dots,v_k$ and $u_1,u_2,\dots,u_k$, and there is a permutation $\phi:[k]\rightarrow [k]$ such that $v_iu_{\phi(i)}$ are the remaining edges.

\begin{question}
Does every permutation snark $G$ have Frank number $2$?
\end{question}

\eject
\newgeometry{top=30mm, bottom=30mm}
\section*{Appendix}
\verbatiminput{a_m_strong_petersen2.txt}
\end{document}